\theoremstyle{plain}
\newtheorem{theorem}{Theorem}
\newtheorem*{algorithm}{Algorithm}
\newtheorem{lemma}{Lemma}
\newtheorem{definition}[lemma]{Definition}
\DeclareMathOperator{\Cut}{Cut}
\newcommand{\bc}{\mbf{c}}
\newcommand{\bs}{\mbf{s}}
\newcommand{\bm}{\mbf{m}}
\newcommand{\bu}{{\mbf{u}}}
\newcommand{\bv}{{\mbf{v}}}
\newcommand{\sixty}{s_{1960}}
\newcommand{\bsixty}{\bs_{\tup{\tbf{1960}}}}
\DeclareMathOperator{\At}{At}
\DeclareMathOperator{\Split}{Split}
\newcommand{\cV}{\E{V}}
\newcommand{\cC}{\E{C}}
\begin{document}

\title[On the 1960 sectional complement]{On the algorithmic construction\\of the 1960 sectional complement}

\author{G. Gr\"{a}tzer}
\address{Department of Mathematics\\
   University of Manitoba\\
   Winnipeg, MB R3T 2N2\\
   Canada}
\email[G. Gr\"atzer]{gratzer@mac.com}
\urladdr[G. Gr\"atzer]{http://server.math.umanitoba.ca/homepages/gratzer/}

\author{G. Klus}
\email[G. Klus]{garett.klus@gmail.com}

\author{A. Nguyen}
\email[A. Nguyen]{athena.nguyen@gmail.com}

\date{Sept. 11, 2009; revised: Aug. 5, 2010}
\keywords{Sectionally complemented lattice, sectional complement, finite.}
\subjclass[2000]{Primary: 06C15; Secondary: 06B10}
\begin{abstract}
In 1960, G. Gr\"atzer and E.\,T. Schmidt proved that every finite distributive 
lattice can be represented as the congruence lattice of a sectionally 
complemented finite lattice $L$. For $u \leq v$ in $L$, they constructed a 
sectional complement, which is now called the \emph{1960 sectional complement}. 

In 1999, G. Gr\"atzer and E.\,T. Schmidt discovered a very simple way of constructing a sectional complement in the ideal lattice of a chopped lattice made up of two sectionally complemented finite lattices overlapping in only two elements---the Atom Lemma. The question was raised whether this simple process can be generalized to an algorithm that finds the 1960 sectional complement.

In 2006, G.~Gr\"atzer and M. Roddy discovered such an algorithm---allowing a wide 
latitude how it is carried out. 

In this paper we prove that the wide latitude apparent in the algorithm is deceptive: whichever way the algorithm is carried out, it~produces the same sectional complement. This solves, in fact, Problems 2 and 3 of the Gr\"atzer-Roddy paper. Surprisingly, the unique sectional complement provided by the algorithm is the 1960 sectional complement, solving Problem 1 of the same paper.
\end{abstract}

\maketitle

\section{Introduction}\label{S: Introduction}
We assume that the reader is somewhat familiar with the field: congruences lattices of a finite lattice. For a book on this field, see G. Gr\"atzer \cite{CLFL}, for a survey paper, see G. Gr\"atzer and E.\,T. Schmidt \cite{GS04}. We are interested in the following result of G. Gr\"atzer and E.\,T. Schmidt \cite{GS62}:

\begin{theorem}
Every finite distributive lattice $D$ can be represented as the congruence lattice of a finite \emph{sectionally complemented} lattice $L$.
\end{theorem}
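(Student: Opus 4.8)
The plan is to pass from the distributive lattice $D$ to the finite poset $P:=\operatorname{J}(D)$ of its join-irreducible elements, using $D\cong\operatorname{Down}(P)$, and to recall that for a finite lattice $L$ one has $\operatorname{Con}(L)\cong\operatorname{Down}(Q)$, where $Q$ is the poset of join-irreducible congruences of $L$ (equivalently, the prime intervals of $L$ ordered by prime-projectivity). So it suffices to build a finite \emph{sectionally complemented} lattice $L$ whose poset of join-irreducible congruences is isomorphic to $P$. I would produce $L$ as the ideal lattice $\operatorname{Id}(M)$ of a suitable \emph{chopped lattice} $M$: since $M$ is finite, $L$ is a finite lattice, and by the Gr\"atzer--Schmidt theory of chopped lattices (see \cite{CLFL}) $\operatorname{Con}(L)\cong\operatorname{Con}(M)$.

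For the chopped lattice, the idea is to glue together small sectionally complemented ``blocks.'' Roughly, one takes one copy of $M_3$ for each $p\in P$ --- a copy of $M_3$ is simple, modular, and relatively (hence sectionally) complemented, and so contributes exactly one join-irreducible congruence --- and, for each covering $p\prec q$ in $P$, one inserts a small sectionally complemented ``linking'' block (the gadget of the 1960 construction of \cite{GS62}) glued to the two copies so as to force $\operatorname{con}(\mathfrak p)\le\operatorname{con}(\mathfrak q)$ for the associated prime intervals $\mathfrak p,\mathfrak q$ but \emph{not} the reverse inequality; the overlaps are kept small and are ideals of the blocks they join, so that the blocks are the maximal elements of a well-behaved chopped structure $M$. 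A now-routine computation of $\operatorname{Con}(M)$ from the congruence lattices of the blocks and the identifications forced on the overlaps then shows $\operatorname{Con}(M)\cong\operatorname{Down}(P)\cong D$.

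It remains to prove that $L=\operatorname{Id}(M)$ is sectionally complemented, which is the heart of the matter. Given ideals $I\subseteq J$ of $M$, I must produce an ideal $K\subseteq J$ with $I\vee K=J$ and $I\cap K=\{0\}$ in $\operatorname{Id}(M)$. The idea is to build $K$ block by block: on each block $L_i$ the traces $I\cap L_i\subseteq J\cap L_i$ are intervals of the sectionally complemented lattice $L_i$, so each admits a sectional complement $K_i$ inside $[\,0_i,\,J\cap L_i\,]$; one then lets $K$ be the ideal of $M$ generated by $\bigcup_i K_i$. Transported back to $L$, this ideal is exactly the \emph{1960 sectional complement} $\sixty$. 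The obstacle --- and the reason the construction is delicate --- is that an ideal of a chopped lattice is not determined freely by its traces on the blocks: the overlaps impose linking conditions, and sectional complements within a single block are far from unique, so the $K_i$ must be chosen compatibly (e.g.\ by a recursion running along the chopped structure) so that they agree on every overlap. One must then verify, using the explicit description of joins and meets of ideals in $\operatorname{Id}(M)$ together with a propagation argument across the overlaps, that the glued ideal $K$ really satisfies $I\vee K=J$ and $I\cap K=\{0\}$. For the existence statement at hand it is enough to exhibit one such compatible choice; that the choice is in fact forced --- so that $\sixty$ is well defined, and coincides with the output of the Gr\"atzer--Roddy algorithm --- is the theme of the rest of the paper.
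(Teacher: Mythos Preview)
The paper does not prove this theorem; it is quoted from \cite{GS62} as background, and the chopped lattice $M$, the lattices $N(p,q)$, and the 1960 sectional complement $\bsixty$ are all imported from \cite{GS62} and \cite{GL05}. Your outline---pass to the order $P$ of join-irreducibles of $D$, assemble a finite chopped lattice from small sectionally complemented blocks, read off the congruence lattice, and then argue that $\Id M$ is sectionally complemented---is a faithful sketch of the classical \cite{GS62} strategy, not of anything argued in the present paper.

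Two points deserve correction. First, in the 1960 construction the blocks are the lattices $N(p,q)$, one for each covering pair $p\succ q$ of $P$ (see Figures~\ref{Fi:V}--\ref{Fi:H}); your packaging as ``an $M_3$ per element plus separate linkers for coverings'' is not how the chopped lattice is actually assembled. Second, and more substantively, your description of $\bsixty$ is inaccurate: it is \emph{not} the ideal generated by a union of blockwise sectional complements---generating an ideal from incompatible local pieces would typically make it too large and meet $\bu$ nontrivially, which is precisely the obstacle you identify a sentence later. The actual 1960 complement is given by the closed formula \eqref{Eq:atm2}, $\bsixty=\JJ((\At(\bv)-\At(\bu))-\Split(\bu,\bv))$: one takes the atoms of $\bv$ not in $\bu$ and then \emph{removes} those that split. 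To the extent the present paper offers an alternative route to the sectional complementation of $\Id M$, it is not your build-up-and-reconcile approach but the Gr\"atzer--Roddy algorithm (Section~\ref{S:algorithm}, Theorem~\ref{T:IVmain}): start from the vector $\bm$ of \emph{maximal} local complements and iteratively \emph{cut down} until compatibility is reached.
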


In \cite{CLFL}, we call the construction in \cite{GS62} the \emph{1960 construction}. In \cite{GS62} (redone also in G.~Gr\"atzer and H. Lakser \cite{GL05}), for every $u \leq v \in L$, we construct in one step a sectional complement $\bsixty$, which we call the \emph{1960 sectional complement}.

There is another result in the literature yielding sectionally complemented ideal lattices of chopped lattices:
\begin{lemma}[Atom Lemma, G. Gr\"atzer and E.\,T. Schmidt \cite{GS99a}]\label{T:atom}
Let $M$ be a chopped lattice with two maximal elements $m_1$ and $m_2$. We assume that $\id{m_1}$ and $\id{m_2}$ are sectionally complemented lattices. If $p = m_1\mm m_2$ is an atom, then $\Id M$ is sectionally complemented.
\end{lemma}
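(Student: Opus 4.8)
The plan is to make $\Id M$ concrete and then run a short case analysis governed by the position of the atom $p = m_1 \mm m_2$.

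Since $M$ is finite, every ideal of $\id{m_1}$ or of $\id{m_2}$ is principal, so I would begin by identifying $\Id M$ with the lattice of \emph{compatible pairs}: the map sending an ideal $I$ of $M$ to $\bigl(\bigvee(I \cap \id{m_1}),\ \bigvee(I \cap \id{m_2})\bigr)$ is an isomorphism of $\Id M$ onto
\[
  \{\,(a_1,a_2) \in \id{m_1} \times \id{m_2} : p \le a_1 \iff p \le a_2\,\}.
\]
Here the hypothesis that $p$ is an atom enters through $\id{m_1} \cap \id{m_2} = \{0,p\}$, which is exactly why compatibility reduces to the displayed bi-implication. Meets in $\Id M$ are computed coordinatewise. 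Joins are computed coordinatewise \emph{when the coordinatewise join is again compatible}; if it is not --- which, $p$ being an atom, can happen only because $p$ lies in one coordinate but not in the other --- one \emph{repairs} the pair in a single step, replacing the deficient coordinate $c$ by $c \vee p$, and a second repair is never needed. Finally I would note that, for $U_j \le V_j$ in the sectionally complemented lattice $\id{m_j}$, a sectional complement of $U_j$ (and of $U_j \vee p$) in $[0,V_j]$ is always available.

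Now let $U = (U_1,U_2) \le V = (V_1,V_2)$ in $\Id M$; we must produce a sectional complement of $U$ in $[0,V]$. Dispose first of the easy cases. If $p \not\le V_1$, or if $p \le U_1$ (equivalently $p \le U_2$), pick for each $j$ a sectional complement $S_j$ of $U_j$ in $[0,V_j]$ inside $\id{m_j}$. In both cases $p \not\le S_j$ automatically --- because $S_j \le V_j$ in the first, and because $S_j \mm U_j = 0$ while $p \le U_j$ is an atom in the second --- so $(S_1,S_2)$ is a compatible pair, its coordinatewise meet with $U$ is $(0,0)$, and its coordinatewise join with $U$ is $(V_1,V_2) = V$ and already compatible. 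So $(S_1,S_2)$ works.

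The substance is the remaining case: $p \le V_1, V_2$ but $p \not\le U_1, U_2$. Here the naive choice of coordinatewise sectional complements $S_j$ of $U_j$ can fail, since $(S_1,S_2)$ need not be compatible: $p$ may lie in a join $S_j \vee U_j$ without lying in either factor (non-modularity of $\id{m_j}$ being harmless here), and it can even happen that \emph{every} sectional complement of $U_1$ in $[0,V_1]$ contains $p$ while \emph{every} sectional complement of $U_2$ in $[0,V_2]$ avoids it. If for each $j$ \emph{some} sectional complement $S_j$ of $U_j$ in $[0,V_j]$ contains $p$, take such an $S_j$ for both $j$: then $p \le S_1, S_2$, the pair $(S_1,S_2)$ is compatible, its meet with $U$ is $(0,0)$, and its coordinatewise join with $U$ is $(V_1,V_2) = V$. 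Otherwise, for at least one index --- say, after relabelling, $j = 1$ --- there is a sectional complement $S_1$ of $U_1$ in $[0,V_1]$ with $p \not\le S_1$; on every index $j$ where all sectional complements of $U_j$ contain $p$, replace ``$U_j$'' by ``$U_j \vee p$'' and let $S_j$ be a sectional complement of $U_j \vee p$ in $[0,V_j]$, which automatically satisfies $p \not\le S_j$, $S_j \mm U_j = 0$, and $S_j \vee U_j \vee p = V_j$, and on the remaining indices keep a genuine sectional complement of $U_j$ avoiding $p$ (which exists there), so that $S_j \vee U_j \vee p = V_j$ on every index. Then $p \not\le S_j$ for all $j$, hence $(S_1,S_2)$ is a compatible pair and $(S_1,S_2) \mm U = (0,0)$. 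For the join: the coordinatewise join of $(S_1,S_2)$ with $U$ has first coordinate $S_1 \vee U_1 = V_1 \ni p$, so if its second coordinate $S_2 \vee U_2$ does not contain $p$ the repair step replaces it by $(S_2 \vee U_2) \vee p = V_2$, while if $S_2 \vee U_2$ already contains $p$ then $S_2 \vee U_2 = (S_2 \vee U_2) \vee p = V_2$ anyway. Either way the join equals $(V_1,V_2) = V$, so $(S_1,S_2)$ is the desired sectional complement.

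The only genuinely delicate point is this last case: one must notice that a coordinatewise pair of honest sectional complements may be incompatible, and that the remedy is to complement $U_j \vee p$ in place of $U_j$ on the ``bad'' side, after which the ``missing $p$'' in the join is restored precisely by the one-step repair built into the join of $\Id M$ --- which is exactly why one must retain a genuine sectional complement of $U_j$ on at least one side, so that the join still climbs all the way up to $V$. Everything else, including the claim that the repair never iterates, is routine once $p$ is known to be an atom.
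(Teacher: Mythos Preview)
Your proof is correct and follows the same idea the paper sketches (the paper does not give a full proof of the Atom Lemma, only the outline ``form the sectional complement componentwise; if the resulting vector is not compatible, cut down one component to make it compatible, and then prove that the smaller vector is still large enough''). Your case analysis is more elaborate than the sketch requires---in particular, your device of complementing $U_j \vee p$ in place of $U_j$ on the bad side is just another way of performing the paper's ``cut'' (both remove $p$ from the offending coordinate and then rely on the one-step join repair in $\Id M$ to recover $V_j$)---but the underlying mechanism is the same.
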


The idea of the proof is the following: we form the sectional complement componentwise. If the resulting vector is not compatible, \emph{we cut down one component} to make it compatible, and then prove that the smaller vector is compatible but still large enough to be a sectional complement. 

Based on this idea, in G. Gr\"atzer and M. Roddy \cite{GR07}, we introduce an algorithm that---starting with the local maximal sectional complements, in a finite sequence of \emph{cuts}, $\gS$---produces a sectional complement, $s_{\gS}$. We~have a wide latitude how to perform the algorithm, leading in \cite{GR07} to a series of problems, including: how many sectional complements are constructed by the algorithm, can we obtain the 1960 sectional complement with the algorithm; and so on.

In this paper, we answer all these questions with the following two results.

\begin{theorem}\label{T:firstresult}
Let $\gS$ be any sequence of cuts in the algorithm. Then the sectional complement, $\bs_\gS$, is independent of $\gS$.
\end{theorem}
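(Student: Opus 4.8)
The plan is to isolate one \emph{conservativity} property of the cut operation and then to deduce the independence of $\bs_\gS$ from it by a short induction run along two competing executions of the algorithm. Throughout I work inside the lattice $\cV$ of all \emph{vectors} over the chopped lattice $M$ under discussion (the direct product of the ideal lattices $\id{m_i}$ of the maximal elements $m_i$ of $M$), in which the compatible vectors are exactly the members of $\Id M$. An execution of the algorithm starts at the vector $\bt$ of local maximal sectional complements, and each cut replaces a single component by a strictly smaller one; hence every vector occurring in an execution lies below $\bt$, and in particular the output $\bs_\gS$ is a compatible vector below $\bt$ (the algorithm halts exactly when compatibility has been reached).

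The key step is the following \emph{Conservativity Lemma}: if $\bx$ occurs in some execution, $\bx\to\bx'$ is a legal cut, and $\bc$ is \emph{any} compatible vector with $\bc\le\bx$, then $\bc\le\bx'$. Granting this, the theorem is immediate. Let $\gS,\gS'$ be two sequences of cuts, with executions $\bt=\bx_0\to\bx_1\to\dotsb\to\bx_k=\bs_\gS$ and $\bt=\by_0\to\dotsb\to\by_l=\bs_{\gS'}$. Since $\bs_{\gS'}$ is compatible and $\bs_{\gS'}\le\bt=\bx_0$, an induction on $j$ using the Conservativity Lemma yields $\bs_{\gS'}\le\bx_j$ for every $j$, so $\bs_{\gS'}\le\bx_k=\bs_\gS$; interchanging $\gS$ and $\gS'$ gives the reverse inequality, whence $\bs_\gS=\bs_{\gS'}$.

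It remains to prove the Conservativity Lemma, and this is where the real work — and the main obstacle — lies. A legal cut singles out a component $i$ together with an ``offending'' atom $p$ of some overlap $\id{m_i}\cap\id{m_j}$ with $p\le\bx_i$ but $p$ not below the common restriction of $\bx$ to that overlap; it replaces $\bx_i$ by a value $\Cut(\bx_i,p)$ (a sectional complement of $p$ in a suitable section, produced from the $\At$/$\Split$ recipe of \cite{GR07}) and leaves the other components fixed. If $\bc\le\bx$ is compatible, then on that overlap $\bc_i$ and $\bc_j$ have the same restriction, which is at most that of $\bx_j$, so $p\not\le\bc_i$; hence the lemma reduces to the assertion that $\Cut(\bx_i,p)$ is the \emph{largest} element of $\id{m_i}$ that lies below $\bx_i$ and not above $p$ — equivalently, that the latitude apparent in the definition of $\Cut$ is spurious. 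This is exactly the point at which the special structure of the building blocks of the 1960 construction must be used: in an arbitrary sectionally complemented lattice the elements of $\id{\bx_i}$ not above $p$ need not be closed under join, so one must show, from the explicit description of these pieces in \cite{GS62} and \cite{GL05} together with the precise definition of $\Cut$, that every admissible value of $\Cut(\bx_i,p)$ coincides with that largest $p$-avoiding element. Carrying out this structural verification is the crux; once it is done, the formal induction above finishes the proof. (In spirit this is a Newman-type normal-form argument — termination is already guaranteed by \cite{GR07} — but the Conservativity Lemma is stronger than local confluence, so no explicit diamond-chasing is required.)
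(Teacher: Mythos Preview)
Your route is genuinely different from the paper's. The paper argues in two combinatorial stages: first it pins down the outcome of Step~2 explicitly as the vector $\bm^2$ of~\eqref{Eq:m2}, by observing that every $\cV$-failure at a given coordinate forces that coordinate to~$0$ regardless of the order of cuts; then, for Step~3, it shows that a $\cC$-failure is determined by its \emph{stem} $r\prec q$ (Lemma~\ref{L:1}) and that minimal $\cC$-failures with different stems do not interfere (Lemma~\ref{C:2}), so again the order is immaterial. Your Conservativity Lemma plus the sandwich argument is more conceptual: it shows at once that $\bs_\gS$ is the \emph{largest} compatible vector below $\bm$, which incidentally makes the bridge to Theorem~\ref{T:secondresult} transparent (your lemma is exactly Lemma~\ref{L:clarger} with $\bsixty$ replaced by an arbitrary compatible $\bc\le\bx$).

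However, as written the proposal is not a proof: you correctly reduce the Conservativity Lemma to the claim that the cut value is the largest element below the old value avoiding the offending atom, call this ``the crux'', and then stop. That is the entire content of the lemma, and you have not supplied it. Two further inaccuracies compound this. First, the cuts here are not produced by any ``$\At$/$\Split$ recipe''; they are the fully deterministic prescriptions of Definitions~\ref{D:Vfail}--\ref{D:Cfailure} (via Lemmas~3 and~4 of \cite{GR07}), namely $r_i\mapsto 0$ for a $\cV$-cut and $q_1\mapsto 0$ or $q(r)\mapsto r$ for a $\cC$-cut. Second, the algorithm does \emph{not} ``halt exactly when compatibility has been reached''; it halts when there are no more \emph{failures} in the technical sense (which requires the relevant components to still equal $\bm$), and the compatibility of $\bs_\gS$ that your sandwich argument needs is a nontrivial input from Theorem~\ref{T:IVmain}, not the halting condition.

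The missing verification is, in fact, short once you use the actual structure of $N(p,q)$: for a $\cV(p,q,r)$-cut the component is $r_i$ and the only element below $r_i$ not above $r_i$ is~$0$; for a $\cC(p,q,r)$-cut the offending atom is $q_1$, and in the six-element lattice $N(q,r)=\{0,q_1,r_1,r_2,r,q(r)\}$ the largest element below $q_1$ (respectively, $q(r)$) not above $q_1$ is $0$ (respectively, $r$), matching $\Cut_{\cC}$. Writing out these three lines, and flagging that compatibility of $\bs_\gS$ comes from Theorem~\ref{T:IVmain}, would turn your outline into a complete (and arguably tidier) alternative proof.
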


In other words, the algorithm produces exactly one sectional complement, we denote it by $\bs$. This theorem solves Problems 2 and 3 of \cite{GR07}.

The second result identifies the unique sectional complement provided by Theorem~\ref{T:firstresult}:

\begin{theorem}\label{T:secondresult}
The unique sectional complement $\bs$ produced by the algorithm is the 1960 sectional complement, that is, $\bs = \bsixty$.
\end{theorem}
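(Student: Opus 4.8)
The plan is to establish $\bs=\bsixty$ by proving the two inequalities $\bsixty\le\bs$ and $\bs\le\bsixty$ in $\Id M$, where $M$ is the chopped lattice of the $1960$ construction for the given $u\le v$ and $\bs$ is the common value furnished by Theorem~\ref{T:firstresult}. Throughout it is convenient to view an element of $\Id M$ --- and, more generally, each vector produced during the algorithm --- as a family indexed by the blocks $N_i$ of $M$, a vector being \emph{compatible} precisely when it belongs to $\Id M$. Write $\bm$ for the starting vector of the algorithm, whose $N_i$-component $m_i$ is the local maximal sectional complement in $N_i$. As a preliminary, one records two facts about $\bsixty$: it is compatible (it is an ideal of $M$, by the $1960$ construction), and $\bsixty\le\bm$; the latter holds because, block by block, the $N_i$-component of $\bsixty$ is a sectional complement inside $N_i$ of the corresponding restriction of $u$, hence does not exceed the local maximal one $m_i$. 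Both are a direct inspection of the $1960$ construction.

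Next, $\bsixty\le\bs$. A cut in the algorithm trims a single component, deleting only an element that is inconsistent, over an overlap, with the current value of another component; since a cut shrinks that component merely to the largest value consistent with the offending constraint, it never deletes an element lying below some compatible vector $\le\bm$. Consequently, along \emph{any} sequence $\gS$ of cuts the running vector dominates every compatible vector $\le\bm$, and therefore so does the output $\bs_\gS=\bs$ (this is implicit in the proof of Theorem~\ref{T:firstresult}: in fact $\bs$ is the largest compatible vector below $\bm$). By the preliminary, $\bsixty$ is one such vector, so $\bsixty\le\bs$.

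For the reverse inequality I would exploit the uniqueness in Theorem~\ref{T:firstresult}: it suffices to exhibit \emph{one} terminating sequence of cuts $\gS_0$ whose output is $\bsixty$, and then $\bs=\bs_{\gS_0}=\bsixty$. Starting from $\bm$, while the running vector $\bx$ is incompatible, select an overlap on which $\bx$ is inconsistent and perform the forced cut there --- the only freedom being which overlap to handle next and which of its two sides to trim. The point to verify is that the trimming required to restore consistency over that overlap removes exactly the elements that the $1960$ construction already excludes from the corresponding component of $\bsixty$. Granting this, the invariant $\bsixty\le\bx\le\bm$ is preserved, the running vector strictly decreases so the process terminates, and it terminates exactly at $\bsixty$: once $\bx=\bsixty$ the vector is compatible and no cut is available, while no earlier stopping is possible, since $\bx\neq\bsixty$ together with $\bsixty\le\bx$ still leaves one of the inconsistencies that the $1960$ exclusions were designed to remove.

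The main obstacle is precisely that verification: one must unwind the closed-form description of the $1960$ sectional complement --- block by block, in terms of the colors (join-irreducible congruences) used in the $1960$ construction --- and check that each forced cut removes from its block exactly the elements of $\bm$ that $\bsixty$ omits there, neither more nor less. The order-theoretic scaffolding in the other steps is routine; this component-wise comparison is the only place where the static formula for $\bsixty$ must be matched against the dynamic cutting process, and Theorem~\ref{T:firstresult} makes it tractable by letting us arrange the forced cuts in whatever order is most convenient for the bookkeeping. Alternatively, one might try to show directly that $\bsixty$ is the largest compatible \emph{sectional complement} below $\bm$; since $\bs$ is the largest compatible vector below $\bm$ and is itself a sectional complement, that too would yield $\bs=\bsixty$.
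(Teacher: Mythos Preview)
Your overall shape matches the paper's: establish $\bsixty\le\bm$, show the invariant $\bsixty\le\bc$ survives every cut (hence $\bsixty\le\bs$), and then argue $\bs\le\bsixty$. The first two steps are essentially Lemmas~\ref{slarger} and~\ref{L:clarger} of the paper. Your stronger claim---that \emph{every} compatible vector $\le\bm$ remains below the running vector, so that $\bs$ is the largest compatible vector below $\bm$---is in fact correct (each cut replaces a component by the largest value $\le\bm$ that is compatible with the $\bm$-value on the other side of the overlap, as one checks from the three cut types in Definitions~\ref{D:Vfail}--\ref{D:Cfailure}), but it is not ``implicit in the proof of Theorem~\ref{T:firstresult}'': that proof concerns uniqueness of the output, not its maximality, and you would have to supply the verification yourself.

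The real gap is in the reverse inequality. Your plan to exhibit a run $\gS_0$ terminating at $\bsixty$ rests on the assertion that ``$\bx\neq\bsixty$ together with $\bsixty\le\bx$ still leaves one of the inconsistencies,'' i.e., that no compatible vector lies strictly between $\bsixty$ and $\bm$. But that statement is precisely the paper's final lemma, and once you have it the $\gS_0$ detour is unnecessary: $\bs$ is compatible by Theorem~\ref{T:IVmain}, sits in $[\bsixty,\bm]$, hence equals $\bsixty$. (There is also a mismatch with the algorithm: it cuts only at \emph{failures}---where both relevant components still equal $\bm$---not at arbitrary incompatibilities, so ``$\bx$ incompatible $\Rightarrow$ a cut is available'' is not how the algorithm proceeds. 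This is harmless given Theorem~\ref{T:IVmain}, but it confirms that the $\gS_0$ picture is not doing independent work.) Your ``alternatively'' is exactly the route the paper takes.

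The paper dispatches that key lemma directly: assuming $\bc$ compatible with $\bsixty\le\bc\le\bm$ and $(\sixty)_{qr}<c_{qr}$ for some $q\succ r$, it splits into the cases $c_{qr}=q_1$, $c_{qr}=r_i$, and $c_{qr}\ge r$; in each case the formula $\bsixty=\JJ\bigl((\At(\bv)-\At(\bu))-\Split(\bu,\bv)\bigr)$ produces a witness $p$ to splitting, forces the corresponding $m$-component to be $p_1$, and yields a compatibility contradiction with $c_{qr}$. This block-by-block comparison against $\Split(\bu,\bv)$ is exactly the ``main obstacle'' you flag, and it cannot be traded away for a clever ordering of the cuts.
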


This solves Problems 1--3 of \cite{GR07}.

So how far does the algorithm of \cite{GR07} go? Exactly to the 1960 sectional complement.

This  paper proves that the rather difficult ad hoc original proof from 1960 can be replaced by the algorithm of Gr\"atzer and Roddy. There is still a very important problem that stays open. The algorithm produces a sectional complement under the assumption of the Atom Lemma and for the 1960 construction. Is~there a natural class of sectionally complemented finite chopped lattices, different from these two classes, in which the algorithm produces a sectional complement?

We would like to thank the referee for an unusually insightful report.

\section{Preliminaries}\label{S:Background}
 We assume that the reader is familiar with the concepts of chopped lattice, with vectors, compatible vectors, and ideals of a chopped lattice, see, for instance, the book \cite{CLFL}.

In this paper, $D$ is a finite distributive lattice and $P$ is the order of join-irreducible elements of $D$. $M$ is a chopped lattice, the 1960 construction for $D$ (for $P$). All the definitions and results that follow are about $M$.

\subsection{The chopped lattice $M$}
Figures~\ref{Fi:V}--\ref{Fi:H} are reminders of how we define the chopped lattice $M$ from the lattices $N(p,q)$, where $p \succ q \in P$, and also of the crucial orders $V$, $C$, and $H$. A cover preserving suborder $V$ will be called a $\cV$-suborder, and for $C$, we have $\cC$-suborders.

\begin{figure}[p]
\centering\includegraphics{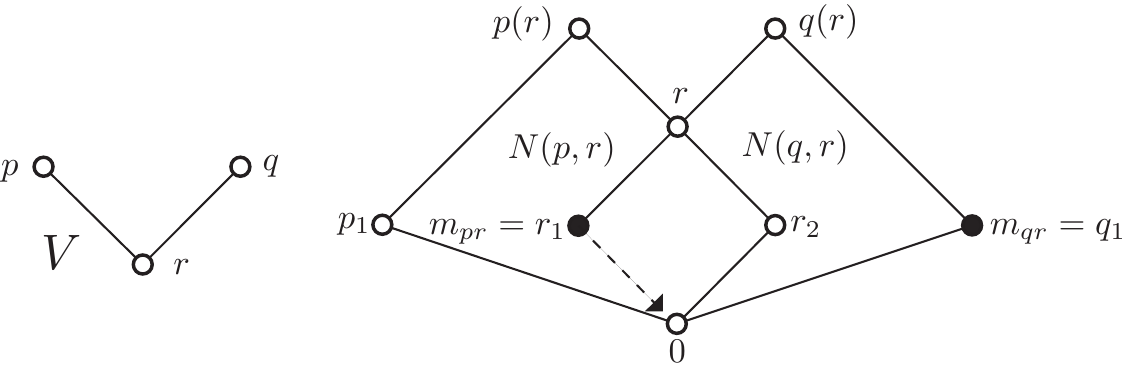}
\caption{Constructing part of $M$ for the order V (and $\cV$-cuts).}\label{Fi:V}

\bigskip

\centering\includegraphics{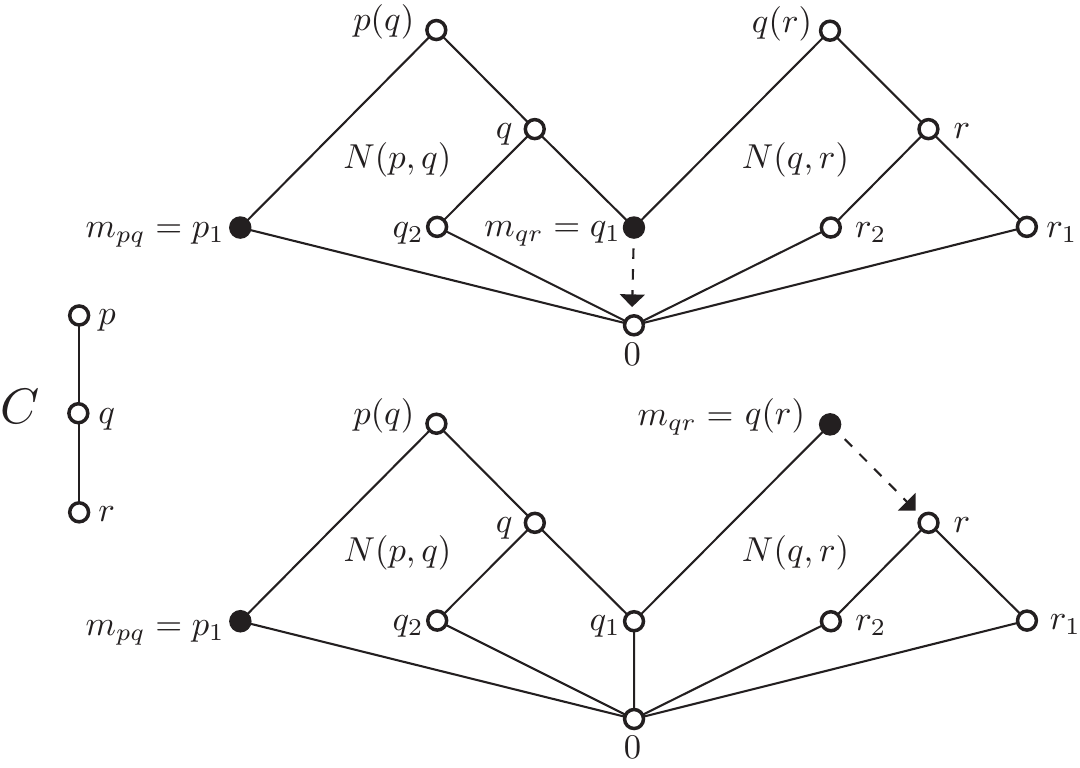}
\caption{Constructing part of $M$ for the order C  (and $\cC$-cuts).}\label{Fi:C}

\bigskip

\centering\includegraphics{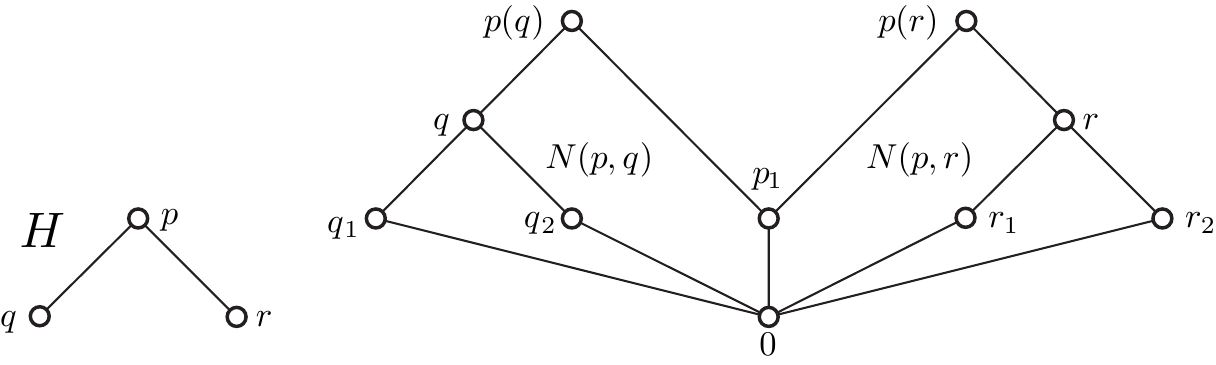}
\caption{Constructing part of $M$ for the order hat.}\label{Fi:H}
\end{figure}

We identify ideals of $M$ with vectors $\bc = (c_{xy} \mid x \succ y \in P)$.

\subsection{The basic definitions}
We need the following definitions to formulate the algorithm from \cite{GR07}.

\begin{definition}\label{D:s}
Let $\bu \leq \bv$ be compatible vectors of the chopped lattice $M$. Define the vector
$\bm = (m_{xy} \mid x \succ y \in P)$, where $m_{xy}$ is the
\emph{maximal sectional complement} of $u_{xy}$ in~$v_{xy}$.
\end{definition}

\begin{definition}\label{D:Vfail}
Let $\set{p,q,r}$ be a $\cV$-suborder.
\begin{enumeratei}
\item  A vector $\bc = (c_{xy} \mid x \succ y \in P)$ is \emph{$\cV$-compat\-i\-ble}
at $\set{p,q,r}$ \tup{(}or $\cV(p,q,r)$-\emph{compatible}\tup{)},
 if $c_{pr} \mm r = c_{qr} \mm r$ in $M$.
 Otherwise, $\bc$ is $\cV$-\emph{incompatible} at $\set{p,q,r}$ \tup{(}or $\cV(p,q,r)$-\emph{incompatible}\tup{)}.
\item The vector $\bc$ is \emph{$\cV$-compatible}, if it is $\cV$-compatible at every $\cV$-suborder in $P$.
\item The vector $\bc$ has a \emph{$\cV$-failure} at $\set{p,q,r}$
\tup{(}or $\cV(p,q,r)$-\emph{failure}\tup{)}, if $\bc$ is
$\cV$-in\-compatible at $\set{p,q,r}$  and, additionally, $c_{pr} = m_{pr}$ and $c_{qr} = m_{qr}$, that is, $\bc = \bm$ on $\set{p,q,r}$.
\end{enumeratei}
\end{definition}

\begin{definition}\label{D:Cfail}
Let $\set{p,q,r}$ be a $\cC$-suborder.
\begin{enumeratei}
\item A vector $\bc = (c_{xy} \mid x \succ y \in P)$ is \emph{$\cC$-compat\-i\-ble}
at $\set{p,q,r}$ \tup{(}or \emph{$\cC(p,q,r)$-compatible}\tup{)}, if
 $c_{pq} \mm q_1 = c_{qr} \mm q_1$  in $M$.
 Otherwise, $\bc$ is \emph{$\cC$-incompatible} at $\set{p,q,r}$ \tup{(}or \emph{$\cC(p,q,r)$-incompatible}\tup{)}.
\item The vector $\bc$ is \emph{$\cC$-compatible}, it is $\cC$-compatible at every $\cC$-suborder in $P$.
\item We say that $\bc$ has a \emph{$\cC$-failure} at $C$
\tup{(}or \emph{$\cC(p,q,r)$-failure}\tup{)}, if $\bc$ is
$\cC$-incompatible at $\set{p,q,r}$ and, additionally, $c_{pr} = m_{pr}$ and $c_{qr} = m_{qr}$, that is, $\bc = \bm$ on $\set{p,q,r}$.
\end{enumeratei}
\end{definition}

Let $\bc$ be a vector. A \emph{cut} of $\bc$ is a vector, $\Cut(\bc)$,
that agrees with $\bc$ except at one coordinate $x \succ y \in P$ (we \emph{cut at} $x, y$) and $c_{xy} \succ \Cut(\bc)_{xy}$ at $x, y$.

\begin{definition}
For a vector $\bc$ with a $\cV(p,q,r)$-failure, we define the $\cV$-cut, $\Cut_{\cV}(\bc)$, as follows. By Lemma~3 of \cite{GR07},  up to symmetry in $p$, $q$ and $r_1$, $r_2$ \lp see Figure~\ref{Fi:V}\rp,
\begin{equation}\label{Eq:Vfail}
   m_{pr} = r_1 \text{ and } m_{qr} = q_1;
\end{equation}
we define $\Cut_{\cV}(\bc)$ by cutting $\bc$ at $\set{p,r}$:
\begin{equation}\label{Eq:Vfail2}
   \Cut_{\cV}(\bc)_{pr} = 0.
\end{equation}
\end{definition}

\begin{definition}\label{D:Cfailure}
For a vector $\bc$ with a $\cC(p,q,r)$-failure, we define the $\cC$-cut, $\Cut_{\cC}(\bc)$, as follows. By Lemma~4 of \cite{GR07}, either \lp see Figure \ref{Fi:C}\rp
\begin{equation}
   m_{pq} = p_1 \text{ and } m_{qr} = q_1,
\end{equation}
and we define $\Cut_{\cV}(\bc)$ by cutting $\bc$ at $\set{q,r}$:
\begin{equation}
   \Cut_{\cV}(\bc)_{qr} = 0;
\end{equation}
or \lp see Figure \ref{Fi:C}\rp
\begin{equation}
   m_{pq} = p_1 \text{ and } m_{qr} = q(r),
\end{equation}
and we define $\Cut_{\cV}(\bc)$ by cutting $\bc$ at $\set{q,r}$:
\begin{equation}
   \Cut_{\cV}(\bc)_{qr} = r.
\end{equation}
\end{definition}

\subsection{The algorithm.}\label{S:algorithm}

We need one more definition:

\begin{definition}
A $\cC(p,q,r)$-failure for $\bc$ at $p \succ q \succ r$ is \emph{minimal}, if there is no $\cC(p',q',r')$-failure for ${\bc}$ with $q' < q$.
\end{definition}

Now we are ready to formulate the algorithm from \cite{GR07}; note that there are some minor changes to improve readability.

\begin{algorithm}
Given the compatible vectors
$\bu \leq \bv$ and the vector $\bm$ as in \tbf{Definition}~\ref{D:s}, we construct a vector $\bs$.

\tbf{Step 1.} Set ${\bc}$ = ${\bm}$.

\tbf{Step 2.} Choose an arbitrary $\cV$-failure \lp if any\rp, and perform the
corresponding $\cV$-cut, obtaining a new $\bc = \Cut_{\cV}(\bc)$. Repeat this until there are no more $\cV$-failures.

\tbf{Step 3.} Look for a \emph{minimal} $\cC$-failure \lp if any\rp, and perform
the corresponding $\cC$-cut, obtaining a new $\bc = \Cut_{\cC} (\bc)$.
Repeat this until there are no more $\cC$-failures.

Since $M$ has finitely many maximal elements and $\Cut(\bc) \leq \bc$, the process must terminate, yielding a vector $\bs$.

\end{algorithm}

The algorithm does not specify the sequence of cuts necessary to complete it. Let $\gS$ denote a particular sequence of cuts in the algorithm and let $\bs_\gS$ denote the vector obtained.

The following is the main result of G. Gr\"atzer and M. Roddy \cite{GR07}.

\begin{theorem}\label{T:IVmain}
The vector $\bs_\gS$ is compatible and it is a sectional
complement of $\bu$ in~$\bv$ in $\Id M$. Hence the lattice $\Id M$
is sectionally complemented. Moreover, for every $p \succ q$ in $P$,
either $(s_{\gS})_{pq} = m_{pq}$ or $(s_{\gS})_{pq} \prec
m_{pq}$.
\end{theorem}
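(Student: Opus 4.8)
The plan is to track the working vector $\bc$ through \textbf{Step~1}--\textbf{Step~3} under three invariants, and then to read the conclusion off the terminal vector. The invariants are: \textbf{(a)} $\bc\le\bm$ (so $\bc\le\bv$) and $c_{xy}\mm u_{xy}=0$ for all $x\succ y$, whence $\id{\bc}\mm\id{\bu}=0$; \textbf{(b)} $\id{\bu}\vee\id{\bc}=\id{\bv}$ in $\Id M$; and \textbf{(c)} for each $x\succ y$, either $c_{xy}=m_{xy}$ or $c_{xy}\prec m_{xy}$, and the coordinate $x,y$ is cut at most once in the whole run. For the initial vector $\bc=\bm$: \textbf{(a)} holds because $m_{xy}$ is the maximal sectional complement of $u_{xy}$ in $v_{xy}$; \textbf{(b)} holds because the componentwise join of $\bu$ and $\bm$ equals $\bv$, which is compatible and hence already the join in $\Id M$; \textbf{(c)} is vacuous. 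Termination needs no work: every cut strictly lowers $\bc$ in the componentwise order of a finite lattice, so only finitely many cuts occur (indeed, by~\textbf{(c)}, at most one per coordinate), and the terminal vector is $\bs_\gS$.

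The core of the proof is that a $\cV$-cut and a $\cC$-cut each preserve \textbf{(a)}--\textbf{(c)}. For \textbf{(a)} and the ``$\prec m_{xy}$'' half of \textbf{(c)} this is a direct check against Figures~\ref{Fi:V}--\ref{Fi:C}: a cut replaces a coordinate by $0$ or by $r$, and one verifies that this value is covered by the relevant $m_{xy}$ and meets the relevant $u_{xy}$ in $0$ (for the $\cC$-cut to $r$, use $r\le m_{qr}$). That the coordinate cut has not been cut before is forced by the definitions---a $\cV(p,q,r)$- or $\cC(p,q,r)$-failure requires $\bc=\bm$ on that suborder (Definitions~\ref{D:Vfail} and~\ref{D:Cfail}), so a coordinate already below $\bm$ cannot carry a failure; together with the facts that $0$ is uncuttable and that all $\cV$-cuts (\textbf{Step~2}) precede all $\cC$-cuts (\textbf{Step~3}), this yields ``at most one cut per coordinate''. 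The substantive invariant is \textbf{(b)}: lowering a coordinate to $0$ or $r$ must not pull $\id{\bu}\vee\id{\bc}$ below $\id{\bv}$. I would argue that the deficit created at the cut coordinate is exactly recovered, through the overlap at which the failure occurred, from a neighbouring coordinate that still sits at $\bv$. For a $\cV(p,q,r)$-failure, \eqref{Eq:Vfail} gives $m_{pr}=r_1$ and $m_{qr}=q_1$, and since the $qr$-coordinate carries the failure it still equals $m_{qr}=q_1$, so $u_{qr}\vee c_{qr}=v_{qr}$ in the block; one then checks that the closure in $\Id M$ of the componentwise join of $\bu$ with the cut vector again has $v_{pr}$ in the $pr$-coordinate, because $v_{pr}$ is the join of $u_{pr}$ with the element that $v_{qr}$ forces up across the shared bottom part $r$ of the $\cV$-suborder---this is the content of Lemma~3 of \cite{GR07} together with the geometry of Figure~\ref{Fi:V}. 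The $\cC$-case is parallel (Lemma~4 of \cite{GR07}, Definition~\ref{D:Cfailure}, Figure~\ref{Fi:C}), the deficit at the $qr$-coordinate being made up by the $pq$-coordinate, which still equals $m_{pq}=p_1$.

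When the algorithm halts, $\bs_\gS$ has no $\cV$-failures and no $\cC$-failures, and the last task is to upgrade this to \emph{compatibility}. At a $\cV$- or $\cC$-suborder, an incompatibility that is not a failure has one of the relevant coordinates strictly below $\bm$, hence equal to a value produced by a cut ($0$ or $r$); a short case analysis against Figures~\ref{Fi:V}--\ref{Fi:C} shows that for exactly those values the defining equation of compatibility (such as $c_{pr}\mm r=c_{qr}\mm r$) holds automatically---e.g. if $c_{pr}=0$ then $c_{pr}\mm r=0$, and the same holds on the $qr$-side by the shape of the block. Compatibility at the remaining (hat) overlaps is inherited downward from $\bv$ in the 1960 construction (see \cite{CLFL}), and cutting only decreases $\bc$; so $\bs_\gS$ is compatible. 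Now \textbf{(b)} gives $\id{\bu}\vee\id{\bs_\gS}=\id{\bv}$, \textbf{(a)} gives $\id{\bu}\mm\id{\bs_\gS}=0$, and $\bs_\gS\le\bv$ gives $\id{\bs_\gS}\le\id{\bv}$, so $\id{\bs_\gS}$ is a sectional complement of $\id{\bu}$ in $\id{\bv}$; hence $\Id M$ is sectionally complemented. The ``moreover'' clause is invariant~\textbf{(c)}.

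I expect the genuine obstacles to be, first, invariant~\textbf{(b)}---a configuration-by-configuration argument that each cut is small enough to restore compatibility yet too small to lose any part of $\id{\bv}$, resting on the explicit structure of the blocks $N(p,q)$ and on Lemmas~3--4 of \cite{GR07}---and, second, the claims that no coordinate is cut twice and that failure-freeness at termination entails full compatibility. It is precisely here that the control structure of the algorithm (all $\cV$-cuts before any $\cC$-cut, and always a \emph{minimal} $\cC$-failure) must be shown to do its job, for instance in preventing a $\cC$-cut from reintroducing a $\cV$-incompatibility that can no longer be removed.
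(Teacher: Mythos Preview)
This theorem is not proved in the present paper: it is quoted verbatim as ``the main result of G.~Gr\"atzer and M.~Roddy \cite{GR07}'' (see the sentence immediately preceding the theorem), and the present paper uses it only as a black box at the end of Section~\ref{S:Theorem1}. There is therefore no proof here to compare your proposal against.

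That said, your outline is a plausible sketch of the kind of argument \cite{GR07} carries out, and you have correctly located the two genuinely hard spots: preservation of invariant~\textbf{(b)} under cuts, and the passage from ``no failures remain'' to full compatibility at termination. Two specific steps in your sketch would not survive as written. First, the claim that ``compatibility at the remaining (hat) overlaps is inherited downward from $\bv$'' is not a valid principle---shrinking a compatible vector does not in general yield a compatible vector---so one must instead argue directly that $\bm$ is already hat-compatible and that $\cV$- and $\cC$-cuts never touch a coordinate that participates in a hat overlap; this requires knowing exactly which atoms are shared in the hat configuration. Second, in your example ``if $c_{pr}=0$ then $c_{pr}\mm r=0$, and the same holds on the $qr$-side by the shape of the block'', the second clause is the entire content: one must show that whenever the $pr$-coordinate has been cut to $0$, the companion coordinate $c_{qr}$ also satisfies $c_{qr}\mm r=0$, and this is not a fact about a single block $N(q,r)$ but about why the cut happened and about the global $\cV$-compatibility achieved after Step~2 (cf.\ Lemma~\ref{L:Vinvariance2} here). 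You acknowledge both points as ``expected obstacles'', which is honest, but they are the substance of the proof rather than loose ends.
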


\section{Proof of Theorem \ref{T:firstresult}}\label{S:Theorem1}
In this section we prove Theorem~\ref{T:firstresult}, that is, we prove that $\bs_\gS$ does not depend on the choice of $\gS$.

\subsection{The invariance for Step 2}\label{S:Cinvariance}

Let $\bm^2$ denote the following vector:
\begin{equation}\label{Eq:m2}
   m^2_{pr} =
   \begin{cases}
       0, &\text{if $m_{pr} = r_i$ and there is a $\cV(p,q,r)$-failure for some $q \succ r$;}\\
       m_{pr}, &\text{otherwise.}
   \end{cases}
\end{equation}

\begin{lemma}\label{L:Vinvariance}
At the end of Step 2, we obtain the vector $\bm^2$, independent of the sequence of $\cV$-cuts performed.
\end{lemma}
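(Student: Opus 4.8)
The plan is to show that Step 2 has the \emph{confluence} (diamond) property: if two different $\cV$-cuts are available for a vector $\bc$, then after performing either one and continuing, one reaches the same terminal vector. Since Step 2 always terminates (each $\cV$-cut strictly decreases $\bc$ in the componentwise order and $M$ is finite), a Newman-type argument then gives a unique normal form, and it remains only to identify it with $\bm^2$. The key structural fact I would exploit is that a $\cV$-cut at a suborder $\set{p,q,r}$ only modifies the coordinate $(p,r)$ (setting it to $0$), and whether a $\cV(p,q,r)$-failure is present depends only on the coordinates $c_{pr}$ and $c_{qr}$ together with the ambient lattice $M$ — in particular it depends on whether those two coordinates still equal $m_{pr}$ and $m_{qr}$.

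First I would record the local analysis: by \eqref{Eq:Vfail}, at a $\cV(p,q,r)$-failure we have $m_{pr}=r_1$, $m_{qr}=q_1$ (up to symmetry), and the cut sets $c_{pr}=0$. After this cut, coordinate $(p,r)$ is $0$, so no $\cV$-failure involving $(p,r)$ as the ``$pr$-slot'' can recur; moreover lowering $c_{pr}$ to $0$ cannot create a $\cV$-failure elsewhere, because a failure requires the relevant coordinate to be \emph{exactly} $m$, and we have strictly decreased it. Next I would check the two-cut commutation cases. Suppose $\bc$ has $\cV$-failures at $\set{p,q,r}$ and at $\set{p',q',r'}$. If these two suborders involve disjoint coordinate pairs $(p,r)\ne(p',r')$ and $(p,r)\ne(q',r')$ (and symmetrically), the two cuts act on different coordinates and trivially commute. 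The interesting overlap is when the coordinate cut by one is a coordinate read by the other — e.g. $r=r'$ and the failure at $\set{p',q',r}$ uses coordinate $(p,r)$ in its ``$qr$-slot'', i.e. $q'=p$. Here I would use the explicit description of $\cV$-suborders in Figure~\ref{Fi:V} and Lemma~3 of \cite{GR07} to show that in fact such simultaneous failures cannot both have $c$ equal to $m$ on the shared coordinate unless performing the first cut merely removes the second failure (because after setting $c_{pr}=0$ the second suborder is no longer at $\bm$, hence has no \emph{failure}, only possibly incompatibility which Step 2 ignores). In every case, the set of $\cV$-failures after one cut is obtained from the set before by deleting exactly those failures whose $(p,r)$-slot was the cut coordinate; this makes the rewriting system locally confluent.

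Having confluence plus termination, the terminal vector $\bs$ of Step 2 is unique. To see $\bs=\bm^2$, observe that a coordinate $m_{pr}$ is lowered (to $0$) during Step 2 if and only if at some point a $\cV(p,q,r)$-failure with that $(p,r)$-slot is selected; and from the monotonicity remarks, the coordinate $(q,r)$ read by such a failure is never itself altered before the $(p,r)$-cut is taken — because altering $(q,r)$ would require it to be the $(p,r)$-slot of some \emph{other} failure, and I would argue (again via the $\cV$-geometry) that the chain of such dependencies is acyclic, so one can always schedule the cuts so that $c_{qr}=m_{qr}$ still holds when the $(p,r)$-cut fires. Hence $c_{pr}$ is cut precisely when the right-hand condition of \eqref{Eq:m2} holds for $(p,r)$, i.e. $\bs_{pr}=0=m^2_{pr}$ in that case and $\bs_{pr}=m_{pr}=m^2_{pr}$ otherwise.

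The main obstacle I anticipate is the overlap case where one $\cV$-cut's target coordinate is another's input coordinate: I must rule out a genuine ``cascade'' in which performing cut $A$ first destroys failure $B$ but performing $B$ first would have forced a different coordinate to $0$, changing the outcome. Resolving this requires a careful case check against the three pictures (Figures~\ref{Fi:V}--\ref{Fi:H}) and the precise form of the $\cV$-suborders — specifically, showing that two $\cV$-failures can share the coordinate $(p,r)$ only in the ``$pr$-slot of one $=$ $qr$-slot of the other'' configuration, and that in that configuration the definition of \emph{failure} (equality with $\bm$ on \emph{both} coordinates of the suborder) is fragile enough that one cut annihilates the other rather than redirecting it. Once that lemma-sized case analysis is in hand, the confluence argument and the identification with $\bm^2$ are routine.
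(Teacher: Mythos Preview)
Your confluence/Newman's-lemma framework is sound in principle, but it is heavier than what is needed, and the ``lemma-sized case analysis'' you flag as the main obstacle dissolves once you notice a single structural fact. In a $\cV(p,q,r)$-failure one has (up to symmetry) $m_{pr}=r_i$ and $m_{qr}=q_1$, and the $\cV$-cut always zeroes the coordinate whose $m$-value is an $r$-atom. Hence the coordinate $(q,r)$ of this failure can \emph{never} be the cut-coordinate of any $\cV$-failure whatsoever, since that would require $m_{qr}$ to be an $r$-atom rather than $q_1$. Therefore the only cuts that can affect the failure $\set{p,q,r}$ are cuts at $(p,r)$ itself---coming from some $\set{p,q',r}$ with the same $p$ and $r$---and every such cut produces the identical result $c_{pr}=0$. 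Your feared ``cascade'' (one cut redirecting another) simply cannot occur: two $\cV$-failures interact only by sharing their cut-coordinate, and then their effects coincide.

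The paper's proof uses exactly this observation to bypass confluence entirely. Because the non-cut coordinate $(q,r)$ of a failure is immutable under Step~2, every $\cV$-failure present in $\bm$ persists until its own cut-coordinate is zeroed, and that zeroing is forced in any execution. This determines the terminal vector coordinatewise by the condition in \eqref{Eq:m2}, with no rewriting theory needed. Your identification of the normal form with $\bm^2$ likewise simplifies: there is no need for an acyclicity argument or a special scheduling of cuts, since the $(q,r)$-coordinate you worry about is never touched at all. So your plan would work, but once you isolate the key fact above, the direct argument of the paper is both shorter and avoids the unresolved case analysis.
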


\begin{proof}
Let us look at $m_{pr}$. If there is a $\cV(p,q,r)$-failure and the corresponding $\cV$-cut was performed, then $r_i$ was replaced by $0$, see Figure~\ref{Fi:V}.

Now assume that there is a $\cV(p,q,r)$-failure but the corresponding $\cV$-cut was not performed. This can only happen if there is a $\cV(p',q',r')$-failure, the corresponding $\cV$-cut was performed, and after the cut there is no $\cV(p,q,r)$-failure. Clearly, $p = p'$, $r = r'$, and $q \neq q'$. But then the $\cV$-cut at $\set{p,q',r}$ would also replace $m_{pr}$ with $0$, so the first line of \eqref{Eq:m2} is verified.

Of course, if there is a $\cV(p,q,r)$-failure for $\bm^2$, then that would also be a $\cV(p,q,r)$-failure for $\bm$, verifying \eqref{Eq:m2}.
\end{proof}

We get something extra for the vector $\bm^2$:

\begin{lemma}\label{L:Vinvariance2}
The vector $\bm^2$ is $\cV$-compatible.
\end{lemma}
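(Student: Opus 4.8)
The plan is to show that after Step 2 the vector $\bm^2$ has no $\cV$-failure, and moreover no $\cV$-incompatibility at all. By Lemma~\ref{L:Vinvariance} we already know the outcome of Step 2 is exactly $\bm^2$, so it suffices to analyze an arbitrary $\cV$-suborder $\set{p,q,r}$ and verify the defining equation $m^2_{pr}\mm r = m^2_{qr}\mm r$ in $M$. The natural case split is according to whether, for $\bm$, the suborder $\set{p,q,r}$ had a $\cV$-failure, was $\cV$-compatible, or was $\cV$-incompatible without being a failure (i.e.\ at least one of $c_{pr}=m_{pr}$, $c_{qr}=m_{qr}$ fails — but for $\bm$ itself both always hold by Definition~\ref{D:s}, so the third case is vacuous on $\bm$ and only the interaction with cuts at \emph{other} coordinates matters).

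First I would dispose of the easy case: if $\set{p,q,r}$ was $\cV(p,q,r)$-compatible for $\bm$, then since the only coordinate among $\set{m^2_{pr},m^2_{qr}}$ that can differ from $\bm$ is $m^2_{pr}$ (and that only when $m_{pr}=r_i$ and some $\cV(p,q'',r)$-failure occurs), I must check that replacing $m_{pr}=r_i$ by $0$ cannot destroy compatibility at $\set{p,q,r}$. Here I would use the geometry of Figure~\ref{Fi:V}: if $m_{pr}=r_i$ then $m_{pr}\mm r = r_i\mm r = 0$, and $\cV(p,q,r)$-compatibility of $\bm$ forces $m_{qr}\mm r = 0$ as well; since $m^2_{qr}=m_{qr}$ always (the first line of \eqref{Eq:m2} only touches the $pr$-coordinate), we get $m^2_{pr}\mm r = 0 = m^2_{qr}\mm r$, so compatibility is preserved. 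The symmetric subcase where $m_{qr}$ rather than $m_{pr}$ is the one equal to some $r_j$ is handled the same way, exchanging the roles.

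The main case is a $\cV(p,q,r)$-failure for $\bm$. By \eqref{Eq:Vfail} (Lemma~3 of \cite{GR07}) we may assume $m_{pr}=r_1$ and $m_{qr}=q_1$; then $m^2_{pr}=0$ by definition of $\bm^2$, and $m^2_{qr}=m_{qr}=q_1$. I must verify $0\mm r = q_1\mm r$ in $M$, i.e.\ that $q_1\mm r = 0$. This is exactly a fact about the chopped lattice $M$ built for the order $V$ — reading it off Figure~\ref{Fi:V}, $q_1$ sits below $q$ and $r$ sits in the other arm, and their meet in $M$ is $0$. So $\bm^2$ is $\cV$-compatible at every $\cV$-suborder that was a failure for $\bm$. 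The one remaining worry is that a $\cV$-cut performed to fix a \emph{failure} could \emph{introduce} a new $\cV$-incompatibility at some other suborder $\set{p',q',r'}$ sharing coordinates; but a $\cV$-cut only ever changes a $pr$-coordinate from some $r_i$ to $0$, and by the computation above such a coordinate already had meet $0$ with $r$, so it cannot change the value of either side of any $\cV$-compatibility equation it participates in on the left, and on the right it changes $m^2_{p'r'}\mm r'$ only from $0$ to $0$. Hence no new incompatibility is created.

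The step I expect to be the main obstacle is pinning down, from the figures alone, the precise meet computations in $M$ — in particular that $r_i\mm r = 0$ and $q_1\mm r = 0$ in the $V$-construction, and the analogous identifications under the ``up to symmetry in $p$, $q$ and $r_1$, $r_2$'' clause of \eqref{Eq:Vfail}. Once those local identities are in hand the argument is a short case check. I would organize the proof as: (i) recall $\bm=\bm$ on every coordinate and invoke Lemma~\ref{L:Vinvariance}; (ii) fix a $\cV$-suborder and split on failure / no failure for $\bm$; (iii) in the no-failure case, show the only possibly-altered coordinate already had meet $0$ with $r$; (iv) in the failure case, use \eqref{Eq:Vfail} plus the $V$-geometry to compute both sides as $0$; and (v) conclude $\bm^2$ is $\cV$-compatible.
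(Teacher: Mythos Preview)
Your meet computations in the $V$-picture are wrong, and this breaks the ``no-failure'' case. In $N(q,r)$ the elements $r_1,r_2$ are atoms \emph{below} $r$ (indeed $r = r_1 \jj r_2$; compare the proof of the last lemma of the paper, where $v_{pr}=r$, $u_{pr}=r_2$, and $m_{pr}=r_1$ is the sectional complement of $r_2$ in $[0,r]$). Hence $r_i \mm r = r_i \neq 0$, not $0$ as you assert. Consequently, if $\bm$ is $\cV(p,q,r)$-compatible with $m_{pr}=r_i$ and $m_{pr}$ gets cut to $0$ because of some $\cV(p,q'',r)$-failure, your computation $m^2_{pr}\mm r = 0 = m^2_{qr}\mm r$ does not go through: compatibility of $\bm$ at $\set{p,q,r}$ gives $m_{qr}\mm r = r_i$, not $0$.

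The argument can be repaired, but it needs an extra idea you do not supply. From $m_{qr}\mm r = r_i$ one gets $m_{qr}=r_i$ (this is the only element of $N(q,r)$ with that meet), and since $m_{q''r}\mm r \neq r_i = m_{qr}\mm r$, there is also a $\cV(q,q'',r)$-failure for $\bm$; hence by \eqref{Eq:m2} the coordinate $m^2_{qr}$ is cut to $0$ as well, and compatibility at $\set{p,q,r}$ is restored with both sides equal to $0$. You miss this because you read \eqref{Eq:m2} as touching only the literal pair $(p,r)$; in fact $p,r$ there are dummy names for an arbitrary covering pair, so $m^2_{qr}$ can change too. (Your failure case is fine: there $m^2_{qr}=q_1$ and $q_1\mm r=0$ is correct, since $q_1$ is the atom of $N(q,r)$ not under $r$.) The paper itself simply invokes Lemma~\ref{L:Vinvariance}, relying on the analysis already carried out there and in \cite{GR07}, rather than redoing a case split.
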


\begin{proof}
By \tbf{Lemma} \ref{L:Vinvariance}.
\end{proof}

\subsection{The invariance for Step 3}

We prove Theorem~\ref{T:firstresult} in this section. In~light of the results of Section~\ref{S:Cinvariance}, we have to prove that any sequence of $\cC$-cuts applied to $\bm^2$ as specified in Step 3 of the algorithm, yields a unique vector. This will be done in the next three lemmas.

Let $C(p,q,r)$ be a $\cC$-suborder. It will be convenient to call $r \prec q$ the \emph{stem} of $C$.

\begin{lemma}\label{L:1}
Let $\bm^2$ have a $\cC$-failure at $C(p,q,r)$.  Then any $\cC$-suborder of $P$ with the same stem, $r \prec q$, also has a $\cC$-failure.  Moreover, all these failures are resolved by the same cut.
\end{lemma}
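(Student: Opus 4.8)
The plan is to analyze the structure of a $\cC$-failure at $C(p,q,r)$ using the explicit description of $\cC$-suborders from Figure~\ref{Fi:C} and the characterization (Lemma~4 of \cite{GR07}) already invoked in \textbf{Definition}~\ref{D:Cfailure}. The key observation is that whether $\bm^2$ has a $\cC$-failure at $C(p,q,r)$ depends only on the data attached to the stem $r \prec q$: by definition of $\cC$-failure, we need $\bm^2 = \bm$ on $\set{p,q,r}$ together with $\cC$-incompatibility, i.e. $m_{pq} \mm q_1 \neq m_{qr} \mm q_1$. First I would unwind this incompatibility condition in $M$. The coordinate $c_{pq}$ sits in the lattice $N(p,q)$ and $c_{qr}$ sits in $N(q,r)$; these two lattices overlap precisely along the stem edge (the sublattice associated to $q \succ r$, giving the elements $q$, $q_1$, $0$, and---in the $H$-case---$q(r)$). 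So the value $m_{qr} \mm q_1$ is determined by $u_{qr}, v_{qr}$ alone, and the value $m_{pq} \mm q_1$ records how much of the ``$q$-part'' the maximal sectional complement in $N(p,q)$ reaches. I would show, using the 1960 construction of $M$ (the shapes in Figures~\ref{Fi:V}--\ref{Fi:H}), that $m_{pq} \mm q_1 \in \set{0, q_1}$ and that which of the two it is does not depend on $p$: it depends only on whether $u_{pq}$ meets the stem, which---because $\bu$ is a fixed compatible vector---is governed by $u$ restricted to the stem $q \succ r$. Hence the incompatibility $m_{pq}\mm q_1 \neq m_{qr}\mm q_1$ is a statement purely about the stem, so it holds for one $\cC$-suborder with stem $r \prec q$ iff it holds for all of them. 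The same stem-locality applies to the side condition $\bm^2 = \bm$ on $\set{p,q,r}$: by the definition of $\bm^2$ in \eqref{Eq:m2}, $m^2$ can differ from $m$ at a coordinate $x \succ y$ only when $x \succ y$ is the top edge of some $\cV$-failure, and one checks this does not interfere with the coordinates $pq$ and $qr$ for a $\cC$-suborder (or, if it could, that interference too is stem-determined). This gives the first assertion: every $\cC$-suborder with stem $r \prec q$ has a $\cC$-failure.

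For the ``moreover'' clause, I would invoke \textbf{Definition}~\ref{D:Cfailure} directly: the $\cC$-cut $\Cut_{\cC}(\bc)$ at a $\cC(p,q,r)$-failure acts only on the coordinate $qr$, replacing $c_{qr}$ by either $0$ or $r$, and which of the two is chosen is dictated solely by the value of $m_{qr}$ (namely $q_1$ versus $q(r)$), together with $m_{pq} = p_1$, which we have just seen always holds in a failure. Since $m_{qr}$ is a function of the stem only, the resulting new value of the $qr$-coordinate is the same regardless of which suborder with stem $r \prec q$ we used to detect the failure. Thus all these failures are resolved by literally the same cut: the single replacement of the $qr$-coordinate.

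The main obstacle I anticipate is the first step---pinning down precisely why $m_{pq} \mm q_1$ is independent of $p$. This requires a careful case analysis of the three building blocks $V$, $C$, $H$ (Figures~\ref{Fi:V}--\ref{Fi:H}) and of where $u_{pq}$ and $v_{pq}$ can lie in $N(p,q)$, and it leans on exactly the combinatorics that Lemma~3 and Lemma~4 of \cite{GR07} codify. A secondary subtlety is making sure the passage from $\bm$ to $\bm^2$ in Step~2 does not spoil the $\cC$-failure condition at $\set{p,q,r}$: one must verify that a $\cV$-cut, which zeroes out a top coordinate $x \succ y$ with $m_{xy} = y_i$, never lands on the edge $pq$ or $qr$ of a $\cC$-suborder in a way that would create or destroy a $\cC$-failure asymmetrically across suborders with the same stem. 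I expect both points to follow cleanly once the geometry of $M$ is laid out, but they are where the real content sits; the rest is bookkeeping against \textbf{Definitions}~\ref{D:Cfail} and \ref{D:Cfailure}.
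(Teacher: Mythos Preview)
Your plan reaches the right conclusion but takes a detour around the key shortcut. The paper's proof dispenses entirely with the case analysis you anticipate by invoking \textbf{Lemma}~\ref{L:Vinvariance2}: since $\bm^2$ is $\cV$-compatible, for any two covers $p,t$ of $q$ the $\cV$-suborder $\set{p,t,q}$ gives $m^2_{pq}\mm q = m^2_{tq}\mm q$, hence $m^2_{pq}\mm q_1 = m^2_{tq}\mm q_1$. Once the $\cC(p,q,r)$-failure forces $m^2_{pq}\mm q_1 = 0$ and $m^2_{qr}\mm q_1 = q_1$ (via Lemma~4 of \cite{GR07}), the same incompatibility holds with $t$ in place of $p$, so $C(t,q,r)$ is a failure too. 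That is the whole argument.

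This single use of the $\cV$-compatibility of $\bm^2$ absorbs both of your anticipated obstacles at once. You propose to argue that $m_{pq}\mm q_1$ is stem-determined by going back to the compatibility of $\bu$ and $\bv$, doing a building-block case analysis in $N(p,q)$, and then separately checking that the passage $\bm\to\bm^2$ does not disturb anything. But those two concerns are precisely what \textbf{Lemma}~\ref{L:Vinvariance2} has already packaged: after Step~2 the vector $\bm^2$ is itself $\cV$-compatible, so the equality $m^2_{pq}\mm q = m^2_{tq}\mm q$ holds directly for $\bm^2$, with no reference to $\bu$, $\bv$, or the geometry of the pieces. Your route could likely be completed, but it would amount to rederiving from scratch a consequence of a lemma already in hand; and note that the relevant comparison between the $pq$- and $tq$-coordinates runs through the $\cV$-suborder $\set{p,t,q}$, not through the stem $r\prec q$ as your phrasing suggests. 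For the ``moreover'' clause you argue exactly as the paper does: the cut in \textbf{Definition}~\ref{D:Cfailure} touches only the stem coordinate $qr$ and its value is dictated by $m_{qr}$ alone.
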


\begin{proof}
Since $\bm^2$ has a $\cC(p,q,r)$-failure, by Lemma~4 of \cite{GR07} (restated in \tbf{Definition}~\ref{D:Cfailure}), $m^2_{pq}\mm q_1 = 0$ and $m^2_{qr}\mm q_1=q_1$. Let $C(t,q,r)$ be a $\cC$-suborder; it shares the stem with $C(p,q,r)$. By~\tbf{Lemma}~\ref{L:Vinvariance2}, the vector $\bm^2$ is $\cV$-compatible, in particular, $V(p,t,q)$ is $\cV$-compatible and so $m^2_{pq}\mm q = m^2_{tq}\mm q$. Therefore, $m^2_{pq}\mm q_1 = m^2_{tq}\mm q_1 = 0$.  Hence, $m^2_{tq}\mm q_1= 0 \neq q_1 = m^2_{qr}\mm q_1$ and so $C(t,q,r)$ is a $\cC$-failure. Since the stem of both $C(p,q,r)$ and $C(t,q,r)$ is $\set{q, r}$, the failures on $C(p,q,r)$ and $C(t,q,r)$ will be corrected (by cutting the value $m_{q,r}$) the same way.
\end{proof}

\begin{lemma}\label{C:2}
Let $C_1$ and $C_2$ be two minimal $\cC$-failures that do not share a stem. Then, after a $\cC$-cut at $C_1$, the chain $C_2$ still has a $\cC$-failure.
\end{lemma}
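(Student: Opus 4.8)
The plan is to pin down precisely which single coordinate of the vector a $\cC$-cut alters, and then to argue that this coordinate cannot interfere with the failure at $C_2$ without violating the minimality hypothesis.

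The first step is to record the two ``locality'' facts that drive the argument. By \tbf{Definition}~\ref{D:Cfailure}, the $\cC$-cut belonging to a $\cC(p,q,r)$-failure changes the vector in exactly one coordinate --- the stem $\set{q,r}$ of $C(p,q,r)$, where the entry $c_{qr}$ is reset to $0$ or to $r$ --- and leaves every other coordinate untouched. Conversely, whether a chain $C_2 = C(p',q',r')$ carries a $\cC$-failure is decided solely by the two entries of the vector at the covering pairs $\set{p',q'}$ and $\set{q',r'}$ of that chain: only these two coordinates occur in the $\cC(p',q',r')$-compatibility condition and in the accompanying requirement that the vector agree with $\bm$ there.

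Combining these, with $C_1 = C(p,q,r)$, the $\cC$-cut at $C_1$ can destroy the $\cC$-failure at $C_2$ only if its stem $\set{q,r}$ coincides with $\set{p',q'}$ or with $\set{q',r'}$. The latter is excluded outright, since $C_1$ and $C_2$ were assumed not to share a stem. For the former: since $q \succ r$ and $p' \succ q'$, the equality $\set{q,r}=\set{p',q'}$ forces $p'=q$ and $q'=r$, so $C_2$ is the chain $q \succ r \succ r'$, whose middle element $r$ lies strictly below $q$; but then $C_2$ would be a $\cC$-failure with middle element below $q$, contradicting the minimality of the $\cC$-failure $C_1$. Hence the $\cC$-cut at $C_1$ alters neither of the two coordinates that govern $C_2$, so $C_2$ remains $\cC$-incompatible and still agrees with $\bm$ on $\set{p',q'}$ and $\set{q',r'}$ --- that is, $C_2$ still has a $\cC$-failure.

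I do not anticipate a real obstacle here; the only care needed is the bookkeeping of covering pairs (matching $\set{q,r}$ against the covering pairs of the chain $C_2$ and ruling out the reversed identification, which is impossible because $q \succ r$) and invoking the minimality hypothesis for the right chain. It is worth noting that only the minimality of $C_1$ --- not that of $C_2$ --- is actually used.
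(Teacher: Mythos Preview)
Your proof is correct and follows essentially the same line as the paper's: the cut at $C_1$ only alters the stem coordinate $\set{q,r}$, this cannot equal the stem of $C_2$ by hypothesis, and it cannot equal the upper covering pair of $C_2$ without contradicting the minimality of $C_1$. The paper's proof is just a terser version of exactly this argument, and your closing observation that only the minimality of $C_1$ is used is accurate (and implicit in the paper's proof as well).
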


\begin{proof}
Since $C_1$ is a minimal $\cC$-failure, the stem of $C_1$ is not the upper covering pair of $C_2$. Since the $\cC$-cut on $C_1$ take place in the $N(p,q)$ corresponding to the stem of $C_1$, the chain $\cC$-failure $C_2$ is not effected by this cut.
\end{proof}

\begin{lemma}
Let $\gS$ be any sequence of $\cC$-cuts on $\bm^2$ such that the vector $\bm^2_{\gS}$ obtained by $\gS$ has no $\cC$-failures.  Then $\bm^2_{\gS}$ does not depend on $\gS$.
\end{lemma}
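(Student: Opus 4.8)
The plan is to deduce this lemma from \tbf{Lemma}~\ref{L:1} and \tbf{Lemma}~\ref{C:2} by a confluence argument in the spirit of Newman's lemma. Call a $\cC$-cut at a minimal $\cC$-failure of a vector a \emph{legal move}; then $\bm^2_{\gS}$ is exactly what one reaches from $\bm^2$ by a maximal run of legal moves, and since Step~3 terminates (each $\cC$-cut strictly lowers the vector, and $M$ has only finitely many maximal elements) it suffices to prove \emph{local confluence}: whenever a vector $\bc$ arising during Step~3 admits two legal moves --- the $\cC$-cut at a minimal $\cC$-failure $C_1=C(p_1,q_1,r_1)$, giving $\bc_1$, and the $\cC$-cut at a minimal $\cC$-failure $C_2=C(p_2,q_2,r_2)$, giving $\bc_2$ --- some vector is reachable from both $\bc_1$ and $\bc_2$ by further legal moves. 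If $C_1$ and $C_2$ share a stem $r\prec q$ there is nothing to do, since by \tbf{Definition}~\ref{D:Cfailure} the associated cut merely lowers the single coordinate $c_{qr}$ to a value ($0$ or $r$) dictated by $m_{qr}$ alone, so the two moves are the same coordinate change and $\bc_1=\bc_2$. Hence we may assume the stems $(q_1,r_1)$ and $(q_2,r_2)$ are distinct.

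For distinct stems I would show that the $C_1$-cut and the $C_2$-cut commute. By \tbf{Lemma}~\ref{C:2}, performing the $C_1$-cut leaves $C_2$ a $\cC$-failure (and symmetrically); the value to which the $C_2$-cut lowers $c_{q_2r_2}$ is untouched, as it depends only on $\bm$; and, arguing as in the proof of \tbf{Lemma}~\ref{C:2}, the $C_1$-cut ``takes place inside $N(q_1,r_1)$'', so the only $\cC$-suborders whose failure status it can change are those having $q_1\succ r_1$ as a covering pair, namely $C(t,q_1,r_1)$ with $t\succ q_1$ or $C(q_1,r_1,w)$ with $w\prec r_1$. Neither can become a $\cC$-failure after the cut, because a $\cC$-failure there requires the vector to agree with $\bm$ on $q_1\succ r_1$, which the cut has destroyed (and a failure at $C(q_1,r_1,w)$ was in any case excluded before the cut by the minimality of $C_1$). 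Thus the $C_1$-cut creates no new $\cC$-failure, so $C_2$ is still a minimal $\cC$-failure afterwards, and symmetrically. Therefore the runs ``$C_1$ then $C_2$'' and ``$C_2$ then $C_1$'' are both legal and both terminate at the vector obtained from $\bc$ by setting $c_{q_1r_1}$ and $c_{q_2r_2}$ to their prescribed values. This common descendant yields local confluence, and the lemma follows.

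The step I expect to be the real obstacle is the assertion that a $\cC$-cut introduces no new $\cC$-failure at a middle element no larger than the one just treated --- this is what keeps the ``minimal $\cC$-failure'' discipline of Step~3 from secretly depending on the order of cuts. It rests on the local geometry of Figure~\ref{Fi:C}: that a $\cC$-cut disturbs only its own stem's coordinate, and that a $\cC$-failure forces agreement with $\bm$ on both covering pairs of the suborder in question. The $\cV$-compatibility of $\bm^2$ (\tbf{Lemma}~\ref{L:Vinvariance2}) and the stem-level reading of \tbf{Lemma}~\ref{L:1} are what make ``a stem'', rather than ``a $\cC$-suborder'', the natural unit on which Step~3 acts, and so make the bookkeeping above go through.
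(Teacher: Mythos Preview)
Your argument is correct and rests on the same two ingredients the paper uses, \tbf{Lemma}~\ref{L:1} and \tbf{Lemma}~\ref{C:2}, together with the observation (implicit in the paper, explicit in your write-up) that a $\cC$-cut cannot create a new $\cC$-failure because a $\cC$-failure requires agreement with $\bm$ on both covering pairs and the cut destroys that agreement on its stem. The difference is one of packaging: the paper argues directly that the terminal vector is determined coordinatewise --- every stem of a $\cC$-failure in $\bm^2$ must eventually become minimal and be cut, the cut value is fixed by $m_{qr}$ alone, and no other coordinate is ever touched --- whereas you recast the same facts as termination plus local confluence and invoke Newman's lemma. Your route is more explicit about why the ``minimal $\cC$-failure'' discipline of Step~3 does not secretly constrain the order of cuts (you verify that $C_2$ remains \emph{minimal}, not merely a failure, after the $C_1$-cut), which the paper's terser proof leaves to the reader; the paper's route, on the other hand, gives a closed-form description of $\bm^2_{\gS}$ rather than just its uniqueness.
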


\begin{proof}
Let $C(p,q,r)$ be a $\cC$-failure. Then $m^2_{pq}=m_{pq}$ and $m^2_{qr}=m_{qr}$.  By \tbf{Lemma}~\ref{L:1}, each stem of a $\cC$-failure is cut uniquely.  By \tbf{Lemma}~\ref{C:2}, a $\cC$-failure is not effected by cutting another $\cC$-failure, unless they share a stem. Since $C(p,q,r)$ will eventually become a minimal failure, $m^2_{qr}$ will be cut uniquely by the algorithm.
\end{proof}

So we proved that $\bs_\gS$ does not depend on the choice of $\gS$. Let $\bs$ denote this vector. By Theorem~\ref{T:IVmain}, $\bs$ is a sectional complement of $\bu$ in $\bv$.

\section{Proof of Theorem \ref{T:secondresult}}

Let $\bu \leq \bv$ be vectors in $M$. Let $\bm$ be the vector defined in \tbf{Definition} \ref{D:s}. Let $\bs_{\tbf{1960}}$ denote the vector representing the 1960 sectional complement of $\bu$---see also~\eqref{Eq:atm2}.  For a vector $\bc$, let $\At(\bc)$ denote the atoms of $M$ (regarded as compatible vectors) contained in $\bc$.

Clearly, $\At(\bm)\ci \At(\bv)$, since $\bm\leq \bv$. Moreover, $\At(\bm) \ii \At(\bu) =\es$, because $\bm \mm \bu=0$. Therefore,
\begin{equation}\label{Eq:atm}
   \At(\bm)\ci \At(\bv)-\At(\bu).
\end{equation}

In this section, we need one more definition from \cite{GS62} and \cite{GL05}. Note that the index of $q_i$ is computed modulo $2$.

\begin{definition}
We say that $q \in P$ \emph{splits over} $(\bu, \bv)$, if there
exists a $p \succ q$ in $P$, with $p_1, q_{i} \in \At(\bv)-\At(\bu)$ and
$q_{i+1} \in \At(\bu)$.
\end{definition}

We denote by $\Split(\bu,\bv)$ the set of all elements $q_i$ in $\At(\bv)-\At(\bu)$ such that $q$ splits over $(\bu, \bv)$ and we borrow from \cite{GR07} and \cite{GS62} the formula:
\begin{equation}\label{Eq:atm2}
    \bs_{\tup{\tbf{1960}}} = \JJ ((\At(\bv)-\At(\bu))-\Split(\bu,\bv))
\end{equation}

\begin{lemma}\label{slarger}
The inequality $\bs_{\tup{\tbf{1960}}} \leq \bm$ holds.
\end{lemma}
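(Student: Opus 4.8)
The plan is to show that the right-hand side of \eqref{Eq:atm2}, as a join of atoms, is dominated coordinatewise by $\bm$. Since $\bm_{xy}$ is by definition the maximal sectional complement of $u_{xy}$ in $v_{xy}$, it suffices to check that every atom $a \in (\At(\bv)-\At(\bu))-\Split(\bu,\bv)$ satisfies $a \leq \bm$; then $\bs_{\tup{\tbf{1960}}}=\JJ((\At(\bv)-\At(\bu))-\Split(\bu,\bv)) \leq \bm$ follows because $\bm$, being a (compatible) vector below $\bv$ and meeting $\bu$ in $0$, is an upper bound in $\Id M$ for every such atom. So the real content is: \emph{every non-splitting atom of $\At(\bv)-\At(\bu)$ lies below $\bm$.}

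First I would fix such an atom $a$ and look at the single coordinate $x \succ y$ where $a$ is supported; by \eqref{Eq:atm} we already know $a \leq \bv$ and $a \mm \bu = 0$, hence $a_{xy} \leq v_{xy}$ and $a_{xy} \mm u_{xy}=0$. The question is whether $a_{xy} \leq m_{xy}$, i.e. whether $a_{xy}$ sits under the \emph{maximal} sectional complement of $u_{xy}$ in $v_{xy}$, not merely under some sectional complement. In the lattices $N(p,q)$ that build $M$ (Figures~\ref{Fi:V}--\ref{Fi:H}) the only way this can fail is the familiar obstruction: $v_{xy}$ contains both atoms $q_i$ and $q_{i+1}$ over the stem, exactly one of which (say $q_{i+1}$) is in $\At(\bu)$, forcing $m_{xy}$ to pick up $q_i$ only in a "split" position. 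I would carry out the case analysis over the three local pictures $V$, $C$, and the hat, using the explicit description of $m_{pr}$, $m_{pq}$, $m_{qr}$ recorded in \eqref{Eq:Vfail}, Definition~\ref{D:Cfailure}, and their analogues, and check that in each case the atoms of $v_{xy}$ not below $m_{xy}$ are precisely those of the form $q_i$ with $q$ splitting over $(\bu,\bv)$ in the sense of the displayed definition. This identifies the set of atoms of $\At(\bv)-\At(\bu)$ not below $\bm$ with a subset of $\Split(\bu,\bv)$; consequently every atom in $(\At(\bv)-\At(\bu))-\Split(\bu,\bv)$ is below $\bm$, which is what we want.

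The main obstacle I expect is purely bookkeeping rather than conceptual: keeping the index arithmetic modulo $2$ straight while matching the combinatorial "splits over" condition against the lattice-theoretic statement "$q_i \not\leq m_{pr}$", across all the symmetry cases of $V$ and $C$ (the up-to-symmetry normalizations in \eqref{Eq:Vfail} and Definition~\ref{D:Cfailure} help here but must be invoked carefully). A secondary point to be careful about is that $\bs_{\tup{\tbf{1960}}}$ is a join in $\Id M$, so after checking the atomwise inequalities I should note that $\bm$ is indeed a vector (an element of $\Id M$) and hence closed under the joins in question — this is immediate from Definition~\ref{D:s}, but worth stating so the passage from "each atom $\leq \bm$" to "$\JJ(\dots)\leq\bm$" is clean.
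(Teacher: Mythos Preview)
Your approach is workable but takes a much longer route than the paper, and it contains one genuine error.

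The paper dispatches the lemma in three lines without any case analysis: since $\bsixty$ is a sectional complement of $\bu$ in $\bv$, we have $\bu \mm \bsixty = 0$ and $\bsixty \leq \bv$; componentwise this says $(\sixty)_{pq}\mm u_{pq}=0$ and $(\sixty)_{pq}\leq v_{pq}$ in each $N(p,q)$, so $(\sixty)_{pq}\leq m_{pq}$ by the defining maximality of $m_{pq}$. The splitting condition and the local pictures $V$, $C$, hat never enter.

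The error in your proposal is the parenthetical claim that $\bm$ is ``a vector (an element of $\Id M$)'', i.e.\ compatible. It is not: that $\bm$ can fail compatibility is precisely why Steps~2 and~3 of the algorithm exist (see Definitions~\ref{D:Vfail} and~\ref{D:Cfail}). So you cannot pass from ``each atom $\leq \bm$'' to ``$\JJ(\ldots)\leq\bm$'' by invoking that $\bm$ is an upper bound \emph{in $\Id M$}. The repair is easy---the join in \eqref{Eq:atm2} may be read componentwise, so the atomwise inequalities do yield $\bsixty\leq\bm$ componentwise---but once you make it you have carried out a case analysis over the local pictures to establish something that the single word ``maximal'' in Definition~\ref{D:s} already delivers.
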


\begin{proof}
Since $\bu \mm  \bsixty = 0$, it follows that $u_{pq} \mm  (\sixty)_{pq} = 0$ in $N(p, q)$, for any $p \succ q$ in $P$. Hence, $\bs_{\tup{1960}} \leq m_{pq}$, for all $p \succ q$ in $P$, therefore, $\bs_{\tup{\tbf{1960}}} \leq \bm$.
\end{proof}

\begin{lemma}\label{L:clarger}
Let $\bu \leq \bv$ be vectors in $P$. Let $\bc$ be a vector obtained in a step of the algorithm and let $\Cut(\bc)$ be the vector obtained in next step of the algorithm. If $\bs_{\tup{\tbf{1960}}} \leq \bc$, then $\bs_{\tup{\tbf{1960}}} \leq \Cut(\bc)$.
\end{lemma}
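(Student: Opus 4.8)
The plan is to argue cut-by-cut: there are only two kinds of cuts, the $\cV$-cut and the two variants of the $\cC$-cut, so it suffices to check in each case that lowering the one coordinate at which $\Cut(\bc)$ differs from $\bc$ does not drop below the corresponding coordinate of $\bsixty$. Since $\bsixty \leq \bc$ is assumed and $\Cut(\bc)$ agrees with $\bc$ off the cut coordinate, the only thing to verify is $(\sixty)_{xy} \leq \Cut(\bc)_{xy}$ at the coordinate $x \succ y$ where the cut is performed. So the real content is a local computation inside the single factor $N(x,y)$ of the chopped lattice, combined with the information that a failure is present there (so $\bc = \bm$ on the relevant suborder, by Definitions \ref{D:Vfail} and \ref{D:Cfail}).

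First I would treat the $\cV$-cut. Here the cut is at $\set{p,r}$ with $\Cut_{\cV}(\bc)_{pr} = 0$, performed because of a $\cV(p,q,r)$-failure, which by \eqref{Eq:Vfail} forces $m_{pr} = r_i$ (one of the two atoms of $N(p,r)$ below $p_1$). I want to show $(\sixty)_{pr} = 0$. The key is formula \eqref{Eq:atm2}: $\bsixty$ is the join of exactly those atoms of $\bv$ not in $\bu$ that are not in $\Split(\bu,\bv)$. A $\cV(p,q,r)$-failure, by \eqref{Eq:Vfail} and the definition of the $N$-lattices in Figure~\ref{Fi:V}, should translate precisely into the statement that $r$ (or the relevant $r_i$) splits over $(\bu,\bv)$ — the atoms $p_1$ and $r_i$ lie in $\At(\bv)-\At(\bu)$ while $r_{i+1}$ (the other atom, which sits under $q_1$ in the shared element) lies in $\At(\bu)$. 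Hence the $r_i$-component of every element of $(\At(\bv)-\At(\bu))-\Split(\bu,\bv)$ is $0$ in $N(p,r)$, so $(\sixty)_{pr} = 0 = \Cut_{\cV}(\bc)_{pr}$, and the inequality is preserved.

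Next the $\cC$-cut, in its two sub-cases from Definition \ref{D:Cfailure}. In both the cut is at the stem $\set{q,r}$, and we must bound $(\sixty)_{qr}$ from above by the new value ($0$ in the first case, $r$ in the second). Again I would read off from the $\cC(p,q,r)$-failure — via Lemma~4 of \cite{GR07} and Figure~\ref{Fi:C} — what it says about which atoms of $N(q,r)$ lie in $\bu$, in $\bv$, and whether $q$ splits; the failure condition $m_{qr} = q_1$ resp. $m_{qr} = q(r)$ dictates the configuration, and in the first case it should force $q_i \in \Split(\bu,\bv)$ so that $(\sixty)_{qr}$ has $0$ as its $N(q,r)$-component, while in the second it should force $(\sixty)_{qr} \leq r$. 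Combined with $\bsixty \leq \bc$ off the cut, this gives $\bsixty \leq \Cut_{\cC}(\bc)$.

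I expect the main obstacle to be the $\cV$-cut case, specifically making the bridge between the order-theoretic failure condition $c_{pr}\mm r \neq c_{qr}\mm r$ with $\bc = \bm$ on $\set{p,q,r}$ and the combinatorial \emph{splitting} condition from Definition~4; one must unwind the construction of $N(p,r)$ and $N(p,q)$, use $\cV$-compatibility of $\bm^2$ from Lemma~\ref{L:Vinvariance2} (or rather argue directly from the failure hypothesis), and track exactly which atom of $\bv$ sits where relative to $\bu$. The $\cC$-cut cases are parallel but with the extra annoyance of the two sub-configurations and the value $r$ (rather than $0$) in the second one, where one needs that the non-split atoms contribute at most $r$ to the $N(q,r)$-coordinate. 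Once the translation is set up, each verification is a short computation in a small lattice from Figures~\ref{Fi:V}--\ref{Fi:H}.
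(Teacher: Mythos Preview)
Your overall plan matches the paper's proof exactly: split into the $\cV$-cut and $\cC$-cut cases, and in each show that the relevant element of $P$ splits over $(\bu,\bv)$, so that the atom being cut away lies in $\Split(\bu,\bv)$ and hence does not contribute to $\bsixty$ at the cut coordinate.

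There is, however, one concrete slip in your $\cV$-cut sketch that would derail the computation. You claim the failure forces $p_1, r_i \in \At(\bv)-\At(\bu)$ and $r_{i+1}\in\At(\bu)$, so that $p\succ r$ witnesses the splitting of $r$. In fact $p_1$ need \emph{not} lie in $\At(\bv)$ at all: the paper first does a short case analysis on $(u_{pr},v_{pr})$ and uses compatibility of $\bv$ at $\set{p,q,r}$ to rule out $v_{pr}=r_i$, concluding $v_{pr}=r$ and $u_{pr}=r_{i+1}$; thus $p_1\not\leq v_{pr}$ and $p_1\notin\At(\bv)$. The splitting of $r$ is witnessed instead by the \emph{other} cover $q\succ r$, via $m_{qr}=q_1$: one gets $q_1, r_i\in\At(\bv)-\At(\bu)$ and $r_{i+1}\in\At(\bu)$. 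This also closes the gap in your conclusion $(\sixty)_{pr}=0$, which requires accounting for all three atoms $p_1,r_1,r_2$ with nonzero $N(p,r)$-component: $p_1\notin\At(\bv)$, $r_{i+1}\in\At(\bu)$, and $r_i\in\Split(\bu,\bv)$. (Incidentally, your parenthetical ``$r_i$, one of the two atoms of $N(p,r)$ below $p_1$'' is off: $p_1,r_1,r_2$ are all atoms of $N(p,r)$, none below another.) Your $\cC$-case outline is on the right track; there the split of $q$ is indeed witnessed by $p\succ q$, and the paper handles both sub-cases at once by showing $q_1\in\Split(\bu,\bv)$.
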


\begin{proof}
Let us assume that $\bs_{\tup{\tbf{1960}}} \leq \bc$. If the algorithm terminates at $\bc$, there is nothing to prove. If the algorithm does not terminate at $\bc$, the next step is a cut of $\bc$. We distinguish two cases.

\emph{Case 1: $\cV$-cut at $V = \set{p,q,r}$.} By symmetry, we can assume that $c_{pr} = m_{pr} = r_1$ and $c_{qr} = m_{qr} = q_1$.  Since $m_{pr} = r_1$ is the maximal sectional complement of $u_{pr}$ in $v_{pr}$, it follows that either
\begin{enumeratei}
\item $v_{pr} = r_1$ and $u_{pr} = 0$;
\item $v_{pr} = r$ and $u_{pr} = r_2$.
\end{enumeratei}
Since $v_{qr} \geq c_{qr} = q_1$, if (i) holds, then either $v_{qr} = q_1$ or $v_{qr}= q(r)$.  In both cases, then,
\begin{equation}\label{Eq:atm3}
   v_{qr} \mm r \neq v_{pr} \mm r = r_1.
\end{equation}
Hence, $v_{pr} = r$ and $u_{pr} = r_2$.  By \eqref{Eq:atm3}, since $r_1, q_1 \in \At(\bv)-\At(\bu)$ but $r_2 \in \At(\bu)$, it follows that $r$ splits over $(\bu, \bv)$.  So $\bsixty \leq \Cut_{\cV}(\bc)$, when restricted to $V$. Since $\bc$ and $\Cut_{\cV}(\bc)$ are equal outside of $V$, we conclude that $\bsixty \leq \Cut_{\cV}(\bc)$.

\emph{Case 2: $\cC$-cut at $C = \set{p,q,r}$.}
We form a $\cC$-cut at $C = \set{p,q,r}$. By \tbf{Definition}~\ref{D:Cfailure}, we have that $m_{pq}=p_1$ and $m_{qr}\geq q_1$. So $p_1,q_1 \in \At(\bm)$, and by \eqref{Eq:atm}, $p_1,q_1\in \At(\bv)-\At(\bu)$. In particular, $p_1,q_1\nin \At(\bu)$. Now  $q_1\nin \At(\bu)$ implies that $u_{pq}=q_2$ or $u_{pq}=0$. In view of $m_{pq}\jj u_{pq}=p(q)$, this yields that $u_{pq}=q_2$. Therefore, $q_2\in \At(\bu)$, and thus, $q_2\nin \At(\bv)-\At(\bu)$. Since $p_1, q_1 \in \At(\bv)-\At(\bu)$ and $q_2\in \At(\bu)$, we see that $q$ splits over $(\bu, \bv)$. Now $q_1 \nin \At(u)$  $\bs_{\tup{\tbf{1960}}} \leq \Cut_{\cV}(\bc)$ when restricted to $C$. Since $\bc$ and $\Cut_{\cC}(\bc)$ are equal outside of $C$, we conclude that $\bsixty \leq \Cut_{\cC}(\bc)$.
\end{proof}

Combining the last two lemmas, we get the inequality $\bsixty \leq \bs$. We prove the reverse inequality (completing the proof of Theorem \ref{T:secondresult}) in the following statement.

\begin{lemma}
Let $\bc$ be a compatible vector for which $\bs_{\tup{\tbf{1960}}} \leq \bc \leq \bm$. Then $\bc = \bs_{\tup{\tbf{1960}}}$.
\end{lemma}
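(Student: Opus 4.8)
The plan is to reduce the statement to an atom count. Since $\bsixty\le\bc$, we have $\At(\bsixty)\ci\At(\bc)$; and since $\bc\le\bm$, \eqref{Eq:atm} gives $\At(\bc)\ci\At(\bm)\ci\At(\bv)-\At(\bu)$. Comparing this with \eqref{Eq:atm2}, it suffices to establish two things: (i) $\At(\bc)\ii\Split(\bu,\bv)=\es$, so that $\At(\bc)=\At(\bsixty)$; and (ii) a compatible vector $\bc$ with $\bsixty\le\bc\le\bm$ is the join of its atoms in $\Id M$. Granting (i) and (ii), and noting that $\bsixty$ is itself a join of atoms by \eqref{Eq:atm2}, we get $\bc=\JJ\At(\bc)=\JJ\At(\bsixty)=\bsixty$, as required.

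For (i), suppose towards a contradiction that $q_i\in\At(\bc)\ii\Split(\bu,\bv)$. By the definition of splitting, fix $p\succ q$ in $P$ with $p_1,q_i\in\At(\bv)-\At(\bu)$ and $q_{i+1}\in\At(\bu)$. On the one hand $q_i\le\bc$, so $q_i$ lies below $\bc$ in every coordinate in which it is nonzero; on the other hand $q_{i+1}\le\bu\le\bv$. Tracking $q_i$, $q_{i+1}$ and $p_1$ through the relevant constituent lattices by means of the compatibility of $\bv$, and reading $N(p,q)$ and $N(q,r)$---for a covering $q\succ r$, or the appropriate $\cV$-configuration when $q$ is minimal in $P$---off Figures~\ref{Fi:V}--\ref{Fi:H}, one checks that $q_i\le c_{pq}$ together with $q_{i+1}\le u_{qr}$ forces either $c_{pq}\mm q_1\neq c_{qr}\mm q_1$ in the $\cC$-suborder $C(p,q,r)$, or $c_{pr}\mm r\neq c_{qr}\mm r$ in the $\cV$-case. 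Either way $\bc$ is incompatible, a contradiction; hence $\At(\bc)\ii\Split(\bu,\bv)=\es$.

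For (ii), fix a covering $p\succ q$. If $(\sixty)_{pq}=m_{pq}$ then $c_{pq}=(\sixty)_{pq}$, a join of atoms. Otherwise passing from $\bm$ to $\bsixty$ alters the $(p,q)$-component of $\bm$ only by deleting the split element at $q$ lying below $m_{pq}$; using the explicit description of $N(p,q)$ one checks that $(\sixty)_{pq}\prec m_{pq}$, so $c_{pq}\in\set{(\sixty)_{pq},m_{pq}}$, again a join of atoms. Thus $\bc=\JJ\At(\bc)$. The main obstacle is the case analysis in (i): turning ``$q_i$ is a split element lying below the compatible vector $\bc$'' into an explicit breakdown of the $\cV$- or $\cC$-compatibility equation requires going through the three local shapes $V$, $C$, $H$ and the possible values of $m_{pq}$ and $m_{qr}$ given by Lemmas~3 and~4 of \cite{GR07}, carefully tracking which of $q_1$, $q_2$ is involved and how it meets $q_1$ in each constituent lattice. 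Part (ii) is routine bookkeeping once the lattices $N(p,q)$ are written out.
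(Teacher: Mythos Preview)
Your overall strategy---reduce to showing $\At(\bc)\cap\Split(\bu,\bv)=\es$ and then use that $\bc$ is a join of atoms---is sound and is really a reorganisation of the paper's argument: the paper's three cases each identify an atom of $\bc$ lying in $\Split(\bu,\bv)$ and derive a contradiction. So the route is not genuinely different.

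However, your execution of (i) has a real gap. From ``$q_i\le c_{pq}$ together with $q_{i+1}\le u_{qr}$'' you cannot force $c_{pq}\mm q_1\neq c_{qr}\mm q_1$. For $i=1$ the premise $q_2\le u_{qr}$ is meaningless, since $q_2$ is not an atom of $N(q,r)$; and for $i=2$ you only get $c_{qr}\mm q_1=0$ (using $c_{qr}\le m_{qr}$ and $m_{qr}\mm u_{qr}=0$), while $q_2\le c_{pq}$ tells you nothing about $c_{pq}\mm q_1$. The step you are missing is precisely the one the paper carries out in each case: from $p_1,q_i\in\At(\bv)-\At(\bu)$ and $q_{i+1}\in\At(\bu)$, compute $v_{pq}=p(q)$ and $u_{pq}=q_{i+1}$, hence $m_{pq}=p_1$. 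Then $q_i\le c_{pq}\le m_{pq}=p_1$ is an immediate contradiction between two distinct atoms---no further appeal to $\cC$- or $\cV$-compatibility is needed. Once you insert this computation, your (i) becomes a clean two-line argument that subsumes all three of the paper's cases at once.

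For (ii), your covering claim $(\sixty)_{pq}\prec m_{pq}$ is correct but unproved, and unnecessary: each $N(p,q)$ is atomistic, so every component $c_{pq}$ is a join of atoms of $N(p,q)$, and since $\bc$ is compatible these are exactly the restrictions of atoms in $\At(\bc)$. Hence $\bc=\JJ\At(\bc)$ in $\Id M$ holds for any compatible vector, with no need for the sandwich $\bsixty\le\bc\le\bm$.
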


\begin{proof}
Let us assume that $\bc = \bsixty$ fails, that is, $\bsixty < \bc$. Then $(\sixty)_{qr} < c_{qr}$, for some $q \succ r$ in $P$. So $c_{qr}>0$. We consider three cases:

\emph{Case 1: $c_{qr}= q_1$.} Then $(\sixty)_{qr}=0$. Clearly, since $\bc\leq \bm$, we have that $\At(\bc) \ci \At(\bm)$. Hence, $q_1\in \At(\bm)$, and by \eqref{Eq:atm}, $q_1 \in \At(\bv)-\At(\bu)$. However, $q_1 \nin \At(\bsixty)$, so by \eqref{Eq:atm2}, $q_1\in \Split(\bu,\bv)$. Therefore, $p_1\in \At(\bv)-\At(\bu)$, for some $p\succ q$, and $q_2\nin \At(\bv)-\At(\bu)$.

Since $p_1,q_1\in \At(\bv)$, clearly, $v_{pq}=p(q)$. Then $q_2\in \At(\bv)$ and $q_2 \nin \At(\bv)-\At(\bu)$, so $u_{pq}=q_2$. Therefore, $m_{pq}=p_1$. Since $c_{pq}\leq m_{pq}=p_1$, it follows that $c_{pq}\mm q_1=0$. This contradicts that $\bc$ is compatible, indeed, $c_{pq}\mm q_1=0$ and $c_{qr} \mm q_1=q_1$ (since $c_{qr}= q_1$).

\emph{Case 2: $c_{qr}=r_i$, for $i=1$ or $2$.} Observe that if
$m_{qr}\geq r$, then $r_1,r_2\in \At(\bm)$, and it follows from
\eqref{Eq:atm} that $r_1,r_2 \in \At(\bv)-\At(\bu)$. Hence,
$r_1,r_2 \in \At(\bsixty)$ by \eqref{Eq:atm2}. Therefore, $(s_{\tup{{1960}}})_{qr}\geq r$, which contradicts the assumption that $c_{qr}> (s_{\tup{{1960}}})_{qr}$. So
\[
   m_{qr}=c_{qr}=r_i.
\]
Then
\begin{align}
   c_{qr}\mm r&= r_i,\label{Eq:needed}\\
   (\sixty)_{qr}&=0.
\end{align}
Hence, $r_i\in \At(\bm)$ and thus, by \eqref{Eq:atm}, $r_i \in \At(\bv)-\At(\bu)$. However, $r_i\nin \At(\bsixty)$, so $r$ splits over $(\bu,\bv)$ by \eqref{Eq:atm2}. But since $q_1 \not\in \At(\bv)-\At(\bu)$, there exists a covering pair $p \succ r$ in $P$ such that $p_1 \in \At(\bv)-\At(\bu)$ but $r_{i+1}\nin \At(\bv)-\At(\bu)$. Since $p_1,r_i\in \At(\bv)$, we conclude that $v_{pr}=p(r)$. Then $r_{i+1}\in \At(\bv)$ and $r_{i+1}\nin \At(\bv)-\At(\bu)$, so $u_{pr}=r_{i+1}$. We conclude that $m_{pr}=p_1$. Therefore, $c_{pr}\leq m_{pr}=p_1$, implying that  $c_{pr}\mm r=0$, contradicting that $\bc$ is compatible and $c_{qr}\mm r= r_i$ by \eqref{Eq:needed}.

\emph{Case 3: $c_{qr}\geq r$.} Since $r_1,r_2 \in \At(\bm)$, it follows from \eqref{Eq:atm} that $r_1,r_2 \in \At(\bv)-\At(\bu)$. So $r_1,r_2\nin \Split(\bu,\bv)$. Hence $r_1,r_2 \in \At(\bsixty)$, which implies that $(s_{1960})_{qr}\geq r$.

Since
$m_{qr}\geq c_{qr} > (s_{1960})_{qr}$, there is only one possibility:
\begin{align}
   c_{qr}=m_{qr}&=q(r),\label{Eq:alsoneeded}\\
   (s_{1960})_{qr}&=r.
\end{align}
So $q_1\in \At(\bv)-\At(\bu)$ and $q_1\nin \At(\bsixty)$, so we conclude that
$q_1\in \Split(\bu,\bv)$. Then, for some $p\succ q$,
\begin{align*}
   p_1&\in \At(\bv)-\At(\bu),\\
   q_2&\nin \At(\bv)-\At(\bu).
\end{align*}
Since $p_1,q_1\in \At(\bv)$, we conclude that $v_{pq}=p(q)$. Then $q_2\in \At(\bv)$, but $q_2\nin \At(\bv)-\At(\bu)$, so $u_{pq}=q_2$ and $m_{pq}=p_1$. Therefore, $c_{pq}\leq m_{pq}=p_1$, implying that $c_{pq}\mm q_1 =0$, contradicting that $\bc$ is compatible and $c_{qr}\mm q_1=q_1$ by \eqref{Eq:alsoneeded}.

Since each case leads to a contradiction and the three cases cover all possibilities, we conclude that $\bc =
\bsixty$.
\end{proof}

\end{document}